\theoremstyle{definition}
\newtheorem{thm}{Theorem}
\newtheorem{lem}[thm]{Lemma}
\newtheorem{cor}[thm]{Corollary}
\newtheorem{defn}[thm]{Definition}
\newtheorem{conj}[thm]{Conjecture}
\newtheorem{prop}[thm]{Proposition}
\newtheorem{exa}[thm]{Example}
\newcommand{\R}{\mathbb R\relax}
\title[On Spaces of Infinitesimal Motions]{On Spaces of Infinitesimal Motions and Henneberg Extensions}
\author{James Cruickshank}
\address{School of Mathematics, Statistics and Applied Mathematics, National University of Ireland Galway, Ireland}
\email{james.cruickshank@nuigalway.ie}
\subjclass[2010]{52C25}
\begin{document}

\maketitle

\begin{abstract}
    We investigate certain spaces of infinitesimal motions arising 
    naturally in the rigidity theory of bar and joint frameworks. 
    We prove some structure theorems for these spaces and as a 
    consequence are able to deduce some special cases of a long
    standing conjecture of Graver, Tay and Whiteley 
    concerning Henneberg extensions and generically rigid graphs.
\end{abstract}

\section{Introduction}
\label{sec:Motivation}

\subsection{Motivation}
In the rigidity theory of bar and joint frameworks the Henneberg
extensions play an important role. Let \( G=(V,E) \) be a simple 
finite graph and let \( n \) and \( k \) be positive integers,
and suppose that \( X \subset V \) with 
\( |X| = n+k \) and \( F \subset E(X) \) with \( |F| =k\). 
We can form a new graph \( G' \) by deleting all the edges in \( F \)
and adding a new vertex of degree 
\( n+k \) that is adjacent to all the vertices of \( X \). We say that 
\( G' \) is an {\em \( n \) dimensional Henneberg \( k \)-extension}
of \( G \) that 
is supported on the vertex set \( X \) and the edge set \( F \).
A major stumbling block to a full 
understanding of the generic rigidity theory of three dimensional bar and
joint frameworks is the lack of good sufficient 
conditions that ensure that a three dimensional Henneberg 2-extension
preserves generic rigidity.
By contrast the planar situation is relatively 
well understood, and we have an extensive theory of planar rigidity
stemming largely from Laman's Theorem (\cite{MR0269535}). 
Laman's result can be proved by an inductive argument 
based on two dimensional Henneberg extensions, so it is obviously of interest
to see if similar lines of reasoning can be pursued in the three
dimensional case.

Given a graph \( G \) and vertices \( i \) and \( j \) of \( G \), 
we say that \( ij \) is an {\em implied edge} 
in three dimensions if any flex of a generic
three dimensional 
framework based on \( G \) is an infinitesimal isometry of the 
set of points corresponding to \( \{i,j\} \) (see section 
\ref{sec:PointsAdMotions} for the relevant background material). 
In particular, 
every edge of \( G \) is an implied edge, but there may be implied edges
that are not edges of \( G \). An {\em implied \( K_4 \)} is a set 
of implied edges of \( G \) that form a complete graph on 4 vertices.
The following Conjecture of Graver, Tay and Whiteley remains open.
For a discussion of this conjecture and many other related problems
the reader should consult \cite{MR1251062} (wherein it is referred to as the 
Henneberg Conjecture) or, for a more recent discussion,
\cite{MR2269396} (Conjecture 2.7).

\begin{conj}
    \label{conj:GraverTayWhiteleyConjecture}
    Let \( G=(V,E) \) be a generically 3-isostatic graph and suppose 
    that \( X \) is a subset of \( V \) such that \( |X| =5 \).
    Let \( e \) and \( f \) be distinct edges whose vertices
    are in \( X \). Suppose that \( G-\{e,f\} \) does not
    contain any implied \( K_4 \) whose vertices are a subset
    of \( X \).  Then the 
    Henneberg extension that removes \( e \) and \( f \) and 
    adjoins a vertex with neighbour set \( X \) results in 
    a generically 3-rigid graph. 
\end{conj}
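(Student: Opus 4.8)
The plan is to reduce the conjecture to a single infinitesimal rigidity computation and then to isolate the combinatorial obstruction. Write $G'$ for the extension and $v_0$ for the new vertex, whose five neighbours in $X$ I denote $x_1,\dots,x_5$. Since $e,f\in E$, the extension has $|E|-2+5=3(|V|+1)-6$ edges, so $G'$ is generically $3$-rigid if and only if it is generically $3$-isostatic; and because generic rigidity is equivalent to the rigidity matrix attaining its maximal possible rank $3|V'|-6$, it suffices to exhibit a \emph{single} realisation at which $G'$ is infinitesimally rigid. I would therefore fix a generic realisation $p$ of $G$ and treat the position $p_0$ of $v_0$ as the only free parameter. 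As $G$ is generically isostatic its rigidity matrix has independent rows, so deleting the rows $e,f$ drops the rank by exactly two; hence $G_0:=G-\{e,f\}$ has an $8$-dimensional space of infinitesimal flexes, that is, a $2$-dimensional space $\mathcal F$ of flexes modulo trivial motions.

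First I would package the extension as a linear map. A flex $m\in\mathcal F$ extends to a flex of $G'$ exactly when there is a velocity $w\in\R^3$ for $v_0$ with $(p_0-p_{x_i})\cdot(w-m(x_i))=0$ for all $i$; writing $A$ for the $5\times3$ matrix with rows $(p_0-p_{x_i})^{\mathsf T}$ and $b_{p_0}(m)\in\R^5$ for the vector with entries $(p_0-p_{x_i})\cdot m(x_i)$, this is solvable iff $b_{p_0}(m)\in W:=\operatorname{col}(A)$, a generically $3$-dimensional subspace of $\R^5$. Every trivial motion extends (its restriction to $v_0$ supplies $w$), so composing $b_{p_0}$ with $\R^5\to\R^5/W$ kills the trivial motions and yields a linear map $\bar b_{p_0}\colon\mathcal F\to\R^5/W$ between two $2$-dimensional spaces; the framework is infinitesimally rigid at $(p,p_0)$ precisely when $\bar b_{p_0}$ is an isomorphism. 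The entire problem is thus to produce a $p_0$ for which $\bar b_{p_0}$ is injective.

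Next I would record why an implied $K_4$ is exactly the obstruction, since this both shows the hypothesis is necessary and tells me what must be excluded. If four neighbours, say $x_1,x_2,x_3,x_4$, form an implied $K_4$ in $G_0$, then every $m\in\mathcal F$ agrees on them with some infinitesimal isometry $x\mapsto a+\omega\times p_x$; a computation with the scalar triple product then gives $(p_0-p_{x_i})\cdot m(x_i)=(p_0-p_{x_i})\cdot(a+\omega\times p_0)$ for $i\le4$, so $b_{p_0}(m)$ coincides modulo $W$ with a vector supported on the single coordinate $i=5$. Hence $\operatorname{Im}\bar b_{p_0}$ is at most one-dimensional, $\bar b_{p_0}$ is never injective, and $G'$ is flexible. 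The conjecture is the converse: if \emph{no} four points of $X$ move rigidly under all of $\mathcal F$, then for generic $p_0$ the map $\bar b_{p_0}$ is an isomorphism.

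To attack the converse I would study the pairing between $\mathcal F$ and the $2$-dimensional annihilator $W^\perp$. One generator of $W^\perp$ is the fixed affine dependence $c^0$ among $x_1,\dots,x_5$ (which lies in $W^\perp$ for every $p_0$), and the other is a combination $\mu$ recording $p_0$ as an affine combination of the neighbours; the condition $m\in\ker\bar b_{p_0}$ then becomes the pair of scalar equations $p_0\cdot\big(\sum_i c^0_i m(x_i)\big)=\sum_i c^0_i\,p_{x_i}\cdot m(x_i)$ together with its analogue for $\mu$. Using the paper's structure theorem I would fix a basis of $\mathcal F$ adapted to the removed edges, treating the cases where $e,f$ are disjoint and where they share a vertex separately, and then try to show that these two functionals on $\mathcal F$ can be linearly dependent for generic $p_0$ only if some four of the vectors $m(x_i)$ satisfy the same affine dependence as the corresponding positions for \emph{every} $m\in\mathcal F$ --- which is precisely the rigid-body, i.e. implied-$K_4$, degeneracy. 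The main obstacle, and the reason the problem is hard, is exactly this last implication: the edge count only fixes $\dim\mathcal F=2$, whereas ruling out the non-$K_4$ ways in which the pairing could degenerate seems to demand genuinely global input from the isostaticity of $G$ --- control of which pairs are implied edges of $G_0$ --- rather than a purely local count. I expect that the pairing can be evaluated by hand when the five neighbours lie in special position, and that the true difficulty is to transport such a computation to all generic $p_0$ without the rigid sub-body reappearing.
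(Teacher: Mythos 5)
You were asked to prove Conjecture \ref{conj:GraverTayWhiteleyConjecture}, which the paper itself does \emph{not} prove --- it remains open, and the paper only establishes special cases (Corollary \ref{cor:FourEdges} and Theorem \ref{thm:FiveEdgesLeft}). Your write-up is, accordingly, not a proof either, and to your credit you say so explicitly. What you do have is a sound reduction, and it is essentially the paper's framework in different notation: your condition that \( \bar b_{p_0} \) has nontrivial kernel for almost all \( p_0 \) is precisely condition (2) of Definition \ref{defn:Admissibility}, i.e.\ \( p \)-admissibility of the induced two-dimensional flex space \( S \subset \R^{3\times 5} \), and your computation showing that an implied \( K_4 \) among the neighbours forces degeneracy is the content of Example \ref{exa:AdmissibleExampleOne}.

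The genuine gap is the final implication you hope to prove: that for generic \( p_0 \) the pairing can degenerate \emph{only} via the implied-\( K_4 \) mechanism. As a statement about an arbitrary two-dimensional flex space \( \mathcal F \), this is not merely difficult --- it is false, and the paper proves it is false. Theorem \ref{thm:FivePointsAlwaysHaveAdmissibleMotionSpaces} (via Proposition \ref{prop:AdmissibilitySufficientCondition}) shows that for \emph{every} generic \( p \in \R^{3\times 5} \) there are infinitely many two-dimensional \( p \)-admissible subspaces that do not restrict to isometries on any three-point (hence any four-point) subconfiguration; Example \ref{exa:AdmissibleExampleTwo} is a concrete instance. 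Each such space makes your map \( \bar b_{p_0} \) singular for almost all \( p_0 \), yet exhibits no rigid sub-body at all. Relatedly, your parenthetical identification of ``the \( m(x_i) \) satisfy the same affine dependence as the positions'' with ``rigid-body, i.e.\ implied-\( K_4 \), degeneracy'' is incorrect: every \emph{affine} motion automatically preserves all affine dependences among the points, and the exotic admissible spaces of Theorem \ref{thm:FivePointsAlwaysHaveAdmissibleMotionSpaces} consist exactly of such affine (non-rigid) motions --- this is case (1) of Theorem \ref{thm:MainTecchnicalResult}, and is where the paper's connection to affine rigidity enters. Consequently no purely local analysis of the pairing between \( \mathcal F \) and \( W^\perp \) can close the argument; any proof must invoke the global hypothesis that \( \mathcal F \) arises as the flex space of \( G-\{e,f\} \) for a generically isostatic \( G \), in order to exclude these spaces. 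That is precisely the ``fundamental obstruction'' the paper isolates, and why it only resolves cases where enough edges in \( E(X) \) remain (e.g.\ \( |E(X)|\geq 7 \) in Theorem \ref{thm:FiveEdgesLeft}) to kill the affine-motion alternative by the conic-at-infinity argument of Lemma \ref{lem:ConicAtInfinity}.
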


A proof of this conjecture would be a significant step towards a 
better understanding of generic rigidity in 3-space as it allow us 
to characterise generically rigid graphs as precisely those that
may be obtained from \( K_3 \) by a sequence of allowable Henneberg 
\(k\)-extensions, where \( k=0,1 \) or \( 2 \).

Conjecture \ref{conj:GraverTayWhiteleyConjecture}
has been resolved in certain special cases. For example 
Graver (\cite{MR1251062}, Section 5.5) 
has shown that if \( E(X) -\{e,f\} \) contains
a 5-cycle then the conjecture is true. Jackson and Owen 
(\cite{JacksonOwenNotes})
have shown that if \( E(X) \) contains a 3-cycle containing \( e \),
and \( e \) and \( f \) are disjoint then the conjecture is true. 

In this article we investigate the infinitesimal dynamic properties 
of rigid frameworks that admit non-rigid Henneberg 2-extensions
More precisely, we should say that the underlying graph admits
a 2-extension that is not generically rigid.  
As an application of our main result, we are able to prove 
Conjecture \ref{conj:GraverTayWhiteleyConjecture} in the case 
where \( |E(X)|\geq 7 \) - generalising the 
result of Graver mentioned above. Note that, in the case where 
\( e \) and \( f \) are vertex disjoint, this result is already implied 
by the result of Jackson and Owen mentioned above. However, 
their proof does rely on the vertex disjointness of \( e \) and \( f \).
Thus, our result is logically independent of theirs.

\subsection{Structure of the paper}
In Section \ref{sec:PointsAdMotions} we review some of the basic 
rigidity theory of frameworks - there are no new results in this section. 
However we present this material in a matrix algebraic setting 
that is appropriate to our later needs.

Sections \ref{sec:PAdmissible}, \ref{sec:KNoneFrameworks} and 
\ref{sec:FivePoints} are devoted to the study of \( p \)-admissibility.
This is the central concept of the paper and essentially captures
the infinitesimal dynamic properties of any rigid framework that admits
a Henneberg extension that is not generically rigid.
We make use of a standard matrix identity known as the Sherman-Morrison
formula to organise some of the messy polynomial equations that 
arise when considering the infinitesimal flexes of a framework. 
The main technical result of the paper is Theorem \ref{thm:MainTecchnicalResult}
which provides interesting geometric information about any possible 
counterexample to Conjecture \ref{conj:GraverTayWhiteleyConjecture}. 
It is also, we believe, of independent interest as it seems to suggest
a non-obvious connection between Euclidean rigidity and affine 
rigidity in the sense of \cite{ThurstonGortler}. 

Finally in Section \ref{sec:Applications}, we apply the results of the 
previous sections to prove some special cases of Conjecture \ref{conj:GraverTayWhiteleyConjecture}.
We also point out a fundamental obstruction to a proof of the general 
case.

\subsection{Notation}

\subsubsection{Matrices}
We write \( \R^n \) for the space of \( n \times 1 \) column vectors
with real entries.
Throughout this article, we will write \( \R^{m\times n} \) for the
set of matrices with real entries that have \( m \) rows and \( n \)
columns. 
For \( p \in \R^{m \times n} \), we write \( p_i \) for the \( i \)th 
column of \( p \).
We write \( \mathbbm 1 \) for the column vector
all of whose entries are \( 1 \) and we write \( I \) for the  identity
matrix.
We write \( m^T \) for the transpose of the matrix \( m \).
In particular, for \( u,p \in \R^n \) the dot product of \( u \) 
and \( p \) is \( u^Tp \).

\subsubsection{Graphs}
In the article graphs are simple undirected graph with no loops
or parallel edges. If \( G=(V,E) \) is 
such a graph and \( E'\subset E \), then 
\( G-E' \) denotes the graph obtained by removing the 
all the edges in  \( E' \)
from \( G \). If \( E' \subset E \) then \( V(E) \) is the set 
of vertices in \( V \) spanned by \( E' \). If \( V'\subset V \) then 
\( E(V') \) is the set of edges in \( E \) spanned by \( V' \). 
\( K(X) \) denotes the complete graph with vertex set  \( X \), 
\( K_n \) denotes the complete graph with \( n \) vertices and
\( K_{p,q} \) denotes the complete bipartite graph with vertex 
parts of order \( p \) and \( q \).

\subsubsection{Miscellaneous}
We write \( \{\{ \cdots\}\} \) to indicate a multiset.
Statements such as ``\( P(x) \) is true for {\em almost all}
\( x \) in \( \R^3 \)'' mean that there is an open dense 
subset \( O \subset \R^3 \) such that \( P(x) \) is true
for all \( x \in O \).

\section{Point Configurations and Motions}
\label{sec:PointsAdMotions}

\subsection{Infinitesimal Isometries of Euclidean Space}

Given a vector field \( \xi :\R^{n}\to \R^{n} \), let \( \xi_x \) 
denote the value of the vector field at a point 
\( x \in \mathbb R^{n} \). 
We say that \( \xi \) is an infinitesimal isometry of 
\( \mathbb R^{n} \)
if 
\[ (\xi_x-\xi_y)^T(x-y)=0  \]
for all \( x, y \in \mathbb R^{ n} \). 
The set of infinitesimal isometries
form a linear subspace of the space of vector fields. Let \( \mathcal I \)
denote this subspace. Let 
\( \mathcal I_0 = \{ \xi \in \mathcal I: \xi_0 = 0\} \).

\begin{lem}
    \label{lem:SkewSymmetrics}
    Given an \( n\times n \)
    skew symmetric matrix \( a \), let \( \xi^a \)
    be the vector field on \( \R^{n} \) defined by 
    \( \xi^a_x = ax \).
    The mapping \( a \mapsto \xi^a\) is a linear
    isomorphism between the space of  skew symmetric matrices 
    and \( \mathcal I_0 \).
\end{lem}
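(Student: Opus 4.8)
The plan is to verify that the map $a \mapsto \xi^a$ is well defined and linear, that it is injective, and then to establish surjectivity, which is the substantive step. Well-definedness amounts to checking that $\xi^a \in \mathcal I_0$ for every skew symmetric $a$. For this I would compute $(\xi^a_x - \xi^a_y)^T(x-y) = (a(x-y))^T(x-y) = (x-y)^T a^T (x-y)$ and use that a skew symmetric matrix satisfies $v^T a v = 0$ for every $v$ (since this scalar equals its own transpose $v^T a^T v = -v^T a v$). Together with $\xi^a_0 = a \cdot 0 = 0$ this shows $\xi^a \in \mathcal I_0$, and linearity in $a$ is immediate from $(a+b)x = ax + bx$. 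Injectivity is equally direct: if $\xi^a$ is the zero field then $ax = 0$ for all $x$, forcing $a = 0$.

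The heart of the argument is surjectivity. Given $\xi \in \mathcal I_0$, I would first extract two pointwise identities from the defining condition. Setting $y = 0$ and using $\xi_0 = 0$ gives $\xi_x^T x = 0$ for all $x$. Expanding $(\xi_x - \xi_y)^T(x - y) = 0$ and cancelling the two terms just shown to vanish yields the antisymmetry relation
\[
\xi_x^T y + \xi_y^T x = 0 \qquad \text{for all } x, y \in \R^n.
\]

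The key observation is that this relation forces $\xi$ to be linear. Indeed, fixing $y$, the identity rewrites as $\xi_x^T y = -\xi_y^T x$, and the right-hand side is a linear function of $x$. Taking $y = e_i$ for each standard basis vector shows that the $i$th coordinate of $\xi_x$, namely $\xi_x^T e_i = -\xi_{e_i}^T x$, is linear in $x$; hence $\xi_x = a x$ for the matrix $a$ with entries $a_{ij} = -(\xi_{e_i})_j$. Finally, applying the antisymmetry relation to the pair $(e_i, e_j)$ gives $(\xi_{e_i})_j + (\xi_{e_j})_i = 0$, which is exactly the statement that $a$ is skew symmetric. Thus $\xi = \xi^a$ for a skew symmetric $a$, completing surjectivity.

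I expect the only genuine obstacle to be recognising that the antisymmetry relation already delivers linearity \emph{for free}: once one writes $\xi_x^T y = -\xi_y^T x$, linearity in $x$ of each coordinate is immediate, so no separate continuity or regularity hypothesis on the vector field $\xi$ is needed. The remaining bookkeeping — matching the matrix $a$ reconstructed from the coordinates with the skew symmetry condition — is routine.
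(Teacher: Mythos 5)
Your proof is correct and takes essentially the same route as the paper: the paper simply defines the inverse map \( \xi \mapsto a_\xi \) via the matrix built from the values \( \xi_{e_j} \) at the standard basis vectors and ``leaves it to the reader'' to verify it is inverse to \( a \mapsto \xi^a \), and your argument is precisely that verification, with the identity \( \xi_x^T y + \xi_y^T x = 0 \) supplying the linearity of \( \xi \) without any regularity assumption. In other words, you have filled in, correctly, exactly the details the paper omits.
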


\begin{proof}
    Let \( \xi \in \mathcal I_0 \). Let \( a_\xi \) be the \( n \times n \)
    matrix whose \( i \)th column is \( \xi_{e_j} \) where \( e_j \)
    is the \( j \)th standard basis vector of \( \R^{n} \). 
    We leave it to the reader to verify that \( \xi\mapsto a_\xi \)
    is the inverse of the map defined in the statement.
\end{proof}

\begin{lem} 
    \label{lem:SkewSymmetricsAndTranslations}
    For any \( \xi \in \mathcal I  \) there are unique \( t \in 
    \R^{n } \) and skew symmetric 
    \( a \in \R^{n \times n} \) such that
    \( \xi_x = t +ax \) for all \( x \in \R^{
    n}\).
\end{lem}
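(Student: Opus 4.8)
The plan is to reduce the general case to the origin-fixing case already handled in Lemma~\ref{lem:SkewSymmetrics} by subtracting off the value of the field at the origin. The key observation driving this reduction is that adding a constant vector field to any member of \( \mathcal I \) leaves the defining condition \( (\xi_x-\xi_y)^T(x-y)=0 \) unchanged, since that condition only sees the differences \( \xi_x-\xi_y \). Thus the constant part (the translation \( t \)) is invisible to the isometry condition and can be stripped away freely.

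For existence, given \( \xi \in \mathcal I \) I would set \( t=\xi_0 \) and define a new field \( \eta \) by \( \eta_x = \xi_x - t \). First I would check that \( \eta \in \mathcal I_0 \): we have \( \eta_0 = \xi_0 - t = 0 \), and for all \( x,y \) the difference \( \eta_x - \eta_y = \xi_x - \xi_y \), so \( (\eta_x-\eta_y)^T(x-y)=(\xi_x-\xi_y)^T(x-y)=0 \), whence \( \eta \) is an infinitesimal isometry fixing the origin. By Lemma~\ref{lem:SkewSymmetrics} there is a skew symmetric matrix \( a \) with \( \eta_x = ax \) for all \( x \), and therefore \( \xi_x = t+ax \), as required.

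For uniqueness, suppose \( \xi_x = t+ax = t'+a'x \) for all \( x \in \R^n \) with \( a,a' \) skew symmetric. Evaluating at \( x=0 \) forces \( t=t' \), and subtracting then yields \( (a-a')x=0 \) for every \( x \), so \( a=a' \); alternatively, uniqueness of \( a \) is immediate from the injectivity asserted in Lemma~\ref{lem:SkewSymmetrics}. I do not expect a genuine obstacle here: the substantive content---that the linear part of an origin-fixing infinitesimal isometry must be skew symmetric---is already packaged into Lemma~\ref{lem:SkewSymmetrics}, and the only point demanding any care is verifying that translation by the constant \( t \) preserves the infinitesimal isometry condition, which is precisely what legitimises the reduction to \( \mathcal I_0 \).
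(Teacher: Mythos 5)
Your proof is correct and follows essentially the same route as the paper: set \( t = \xi_0 \), observe that \( \xi - t \in \mathcal I_0 \) (which is exactly where you note the isometry condition only sees differences \( \xi_x - \xi_y \)), and invoke Lemma~\ref{lem:SkewSymmetrics}; your uniqueness argument is likewise the one the paper implicitly relies on. You simply spell out the verifications that the paper leaves to the reader.
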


\begin{proof}
    Let \( t = \xi_0 \) and observe that \( t \) is the 
    unique element of \( \R^{n} \) such that 
    \( \xi-t \in \mathcal I_0 \).
    Now the lemma follows immediately from Lemma 
    \ref{lem:SkewSymmetrics}.
\end{proof}

\subsection{Point Configurations}
\label{subsec:PointConfigurations}

Let \( n \) be a fixed positive integer. 
A \( k \)-point configuration,
\( p \), in \( \mathbb R^n
\) is just an element of \( \R^{n\times k} \). 
We think of \( p_i \) as the \( i \)th point of the configuration. 

Given a point configuration \( p \in \R^{n \times k} \), 
An {\em infinitesimal motion} of \( p \) 
is a matrix, \( u\in \R^{n \times k} \).
We think of \( u_i \) as the velocity 
vector associated to \( p_i \).
An {\em infinitesimal isometry} of \( p \) is a motion 
\( u \in \R^{n \times k} \) that satisfies 
\begin{equation}
    (u_i-u_j)^T(p_i-p_j)=0
    \label{eq:InfinitesimalIsometryCondition}
\end{equation}
for all \( i,j \). We will also refer to an isometry 
of \( p \) as a trivial motion of \( p \). Thus, it is 
important to note the distinction between nontrivial and
non zero in this context.

Observe that any \( \xi \in \mathcal I \) induces an infinitesimal
isometry \( u \), of \( p \), defined by \( u_i = \xi_{p_i} \). In this
way, we can linearly embed \( \mathcal I  \) in \( \R^{n \times k} \). 
We denote the image of \( \mathcal I \) in \( \R^{n \times k} \) by \( \mathcal I_p \).
It is easily seen that this embedding is injective if and only if 
the affine span of \( p \) has dimension at least \( n-1 \).
On the other hand, it is also easily seen that any 
infinitesimal isometry of \( p \)
is induced by a unique (global) 
infinitesimal isometry of the affine span of \( p \).
In particular, any infinitesimal isometry of \( p \)
is induced by some infinitesimal isometry of \( \R^{n} \).

\begin{lem}
    Given \( p \in \R^{n \times k} \) and \( u \in \R^{n \times k} \),
    then \( u \) is an infinitesimal isometry of \( p \) if and 
    only if there is some skew symmetric matrix \( a\in 
    \R^{n\times n}\) and some \( t \in \R^{n} \) such that
    \( u = t\mathbbm1^T + ap \).
\end{lem}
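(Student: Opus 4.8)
The plan is to prove the two implications separately, using throughout the observation that the claimed identity \( u = t\mathbbm1^T + ap \) is nothing more than the column-by-column assertion \( u_i = t + ap_i \); that is, it says exactly that \( u \) is the motion of \( p \) induced by the affine vector field \( \xi_x = t + ax \) on \( \R^n \). Both directions are therefore bridges between condition \eqref{eq:InfinitesimalIsometryCondition} and the classification of global infinitesimal isometries supplied by Lemma \ref{lem:SkewSymmetricsAndTranslations}.

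The reverse implication is a short computation. Assuming \( u = t\mathbbm1^T + ap \) with \( a \) skew symmetric, subtracting columns gives \( u_i - u_j = a(p_i-p_j) \), so that
\[
    (u_i-u_j)^T(p_i-p_j) = (p_i-p_j)^T a^T (p_i-p_j) = -(p_i-p_j)^T a (p_i-p_j).
\]
Since \( a \) is skew symmetric the quadratic form \( x^T a x \) equals its own negative and hence vanishes for every \( x \), so \eqref{eq:InfinitesimalIsometryCondition} holds for all \( i,j \) and \( u \) is an infinitesimal isometry of \( p \). Equivalently, one may quote the remark preceding the lemma: \( \xi_x = t + ax \) lies in \( \mathcal I \) by Lemma \ref{lem:SkewSymmetricsAndTranslations}, and every element of \( \mathcal I \) induces an infinitesimal isometry of \( p \).

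For the forward implication one could simply invoke the remark that any infinitesimal isometry of \( p \) is induced by some global \( \xi \in \mathcal I \) and then apply Lemma \ref{lem:SkewSymmetricsAndTranslations}; I prefer to reconstruct \( (t,a) \) explicitly, since this exposes exactly where the geometry of \( p \) enters. Assume first that \( p \) affinely spans \( \R^n \) (I return to this hypothesis below) and choose \( n+1 \) affinely independent points of \( p \); after relabelling, call them \( p_0,\dots,p_n \), so that \( p_1-p_0,\dots,p_n-p_0 \) form a basis of \( \R^n \). Define a linear map \( a \) by \( a(p_j-p_0)=u_j-u_0 \) and set \( t = u_0 - ap_0 \); then \( u_i = t+ap_i \) holds on the chosen points by construction. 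Writing the isometry condition among these points as \( (p_i-p_j)^T a^T(p_i-p_j)=0 \) says that the symmetric part of \( a \) annihilates the difference vectors \( p_i-p_j \); polarising over all pairs then forces the symmetric part to vanish, so \( a \) is skew symmetric. Finally, for any remaining point \( p_i \) compare \( u_i \) with \( \tilde u_i := t + ap_i \): both satisfy \eqref{eq:InfinitesimalIsometryCondition} against every chosen point, so \( (u_i-\tilde u_i)^T(p_i-p_a)=0 \) for all \( a \), and since the vectors \( p_i-p_a \) span \( \R^n \) this yields \( u_i = \tilde u_i \).

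The step I expect to be delicate, and the one I would scrutinise most, is obtaining \( a \) as a genuinely skew symmetric \( n\times n \) matrix, for this is precisely where the affine span of \( p \) must be full-dimensional: only then do the difference vectors \( p_i-p_j \) span enough of \( \R^n \) to force the symmetric part of \( a \) to vanish. Indeed the clean form of the lemma fails for degenerate configurations -- for three distinct collinear points in the plane one may displace a single point perpendicular to the line, satisfying \eqref{eq:InfinitesimalIsometryCondition} while admitting no representation \( t\mathbbm1^T + ap \) -- so the lemma, and the ``induced by a global isometry of \( \R^n \)'' remark it rests on, should be read under the standing assumption that \( p \) affinely spans \( \R^n \). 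Under that hypothesis the reconstruction above goes through, and packaging \( (t,a) \) as a field in \( \mathcal I \) via Lemma \ref{lem:SkewSymmetricsAndTranslations} gives the stated equivalence.
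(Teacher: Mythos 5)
Your proof is correct, and it takes a genuinely different route from the paper's. The paper disposes of the lemma in two sentences: it quotes the observation made just before the statement --- that any infinitesimal isometry of \( p \) is induced by some global infinitesimal isometry of \( \R^n \) --- and then applies Lemma \ref{lem:SkewSymmetricsAndTranslations} to write that global isometry as \( \xi_x = t + ax \) with \( a \) skew symmetric. The backward implication is the same easy computation in both treatments; the difference is the forward implication, where you prove the extension fact by hand rather than citing it: you define \( a \) on the basis \( p_1-p_0,\dots,p_n-p_0 \), kill its symmetric part by polarising the isometry condition over the chosen pairs, and propagate \( u_i = t + ap_i \) to the remaining points using that the vectors \( p_i - p_j \), \( j = 0,\dots,n \), span \( \R^n \). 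Each of these steps is sound (one small wording slip: the isometry condition says the \emph{quadratic form} of the symmetric part vanishes on the difference vectors, not that the symmetric part maps them to zero; your polarisation step is exactly what turns the former into the latter conclusion).

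What your explicitness buys is a correct accounting of hypotheses, and here you have caught a real imprecision in the paper. The lemma as stated, for arbitrary \( p \in \R^{n \times k} \), is false, and your collinear example is a valid counterexample: with \( p_1=(0,0) \), \( p_2=(1,0) \), \( p_3=(2,0) \) and \( u_1=(0,1) \), \( u_2=u_3=(0,0) \), condition \eqref{eq:InfinitesimalIsometryCondition} holds for every pair, yet any representation \( u = t\mathbbm{1}^T + ap \) (for any matrix \( a \), skew or not) would force \( u_3-u_2 = a(p_3-p_2) = a(p_2-p_1) = u_2-u_1 \), which fails. The same example shows that the ``easily seen'' remark on which the paper's proof rests also requires a hypothesis: the extension to a global infinitesimal isometry of \( \R^n \) is guaranteed when \( p \) affinely spans \( \R^n \) (your argument), or when the points of \( p \) are affinely independent (one can then interpolate the normal components by an affine map and fold it into a larger skew symmetric matrix), but not in general. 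None of this damages the paper, since the lemma is only ever applied to generic, hence spanning, configurations; but the statement that is actually true is yours, with the spanning hypothesis made explicit.
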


\begin{proof}
    As we have just observed, \( u \) is an infinitesimal 
    isometry of \( p \) if and only there is some 
    infinitesimal isometry \( \xi \) of \( \R^{n} \)
    such that \( \xi_{p_i} = u_i \). Now the result follows 
    immediately from Lemma \ref{lem:SkewSymmetricsAndTranslations}.
\end{proof}

We say that motions \( u \) and \( u' \) of \( p \) are \( p \)-equivalent
if \( u-u' \in \mathcal I_p \). We also can extend this notion to 
subspaces of \( \R^{n\times k} \). Thus, linear 
subspaces \( S \) and \( S' \) of \( \R^{n\times k} \) are said to 
to be {\em \( p \)-equivalent} if they have the same dimension and they
have the same image under the projections \( \R^{n\times k}
\to \R^{n\times k}/\mathcal I_P\). One readily checks that \( S \) 
and \( S' \) are \( p \)-equivalent if and only if there is 
a linear isomorphism \( f:S \to S' \) such that \( u-f(u) \in 
\mathcal I_p\) for all \( u \in S \).

The following notion will be useful later on. We say that a vector field
\( \xi \) on \( \mathbb R^n \) is a {\em linear field} if the function 
\( x \mapsto \xi_x \) is  linear. We say that \( \xi \) is an {\em affine
field} if the function \( x \mapsto \xi_x \) is an affine linear function.
Given a motion \( u \)
of a point configuration \( p \), we say that \( u \) is a {\em
linear motion}, respectively an {\em affine motion}, 
if it is the restriction to \( p \) of a linear vector field,
respectively an affine vector field.
We remark that if \( p \in \R^{n \times (n+1)} \) is in general position
then any \( u\in \R^{n \times (n+1)} \) is an affine motion of \( p \).
On the other hand the affine motions of a general 
\( q \in \R^{n\times (n+2)} \) form a codimension \( n \) affine
subspace of the vector space of all motions of \( q \).

\subsection{Frameworks}

In this section we review some elementary facts about frameworks.
A \( n \) dimensional {\em framework} is a pair 
\( \left( G,p \right) \) where \( G \) is 
a simple undirected graph with vertex set \( V \) and edge set
\( E \) and where \( p\in \R^{n \times |V|} \). 
A \( G \)-framework is a framework whose underlying graph
is \( G \).
For convenience, we will assume for the moment 
that \( V = \{1,\cdots,k\} \). 

An {\em infinitesimal flex} of \( (G,p) \) is 
a motion, \( u\in \R^{n \times |V|} \), that satisfies
\begin{equation}
    (u_i-u_j)^T(p_i-p_j)=0
    \label{eq:FlexDefinition}
\end{equation}
for all \( ij \in E \).
We say that an infinitesimal flex is trivial if it belongs to 
\( \mathcal I_p \). A framework is {\em infinitesimally
rigid} if every infinitesimal flex of the framework is trivial.
A framework is {\em isostatic} if it is infinitesimally rigid and if
the deletion of any edge from \( G \) results in a framework
that is not infnitesimally rigid.

We will be 
particularly concerned with frameworks whose underlying point 
configuration is generic.
We say that \( p \) is {\em generic} if the multiset of its entries is 
algebraically independent over \( \mathbb Q \). 
We observe that  generic
configurations are in particular in general position. Also, we observe 
that if \( p \) is generic then every submatrix of \( p \) has
maximal rank.

We say that a graph \( G \) is {\em generically \( n \)-rigid}
if \( (G,p) \) is rigid for any generic configuration 
\( p \) in \( \mathbb R^{ n\times |V|} \).  
It can be shown that if \( (G,p) \) is infinitesimally 
rigid for some (possibly non-generic) \( p \in \R^{n \times |V|} \)
then \( G \) is generically rigid. On the other hand,
it can happen that for a generically rigid graph \( G \)
there are certain (non-generic) \( G \)-frameworks that are not 
infinitesimally rigid. 
As mentioned in Section \ref{sec:Motivation}, a 
fundamental open problem in combinatorial rigidity
theory is to find a good characterisation of generically 
\( n \)-rigid graphs for \( n \geq 3 \). 
A dimension counting argument suggests that \( n \) dimensional 
Henneberg extensions might preserve generic \( n \)-rigidity. However, as
is well known, this is not in general true
- see Example \ref{exa:AdmissibleExampleOne} below. 

\section{\( p \)-admissible subspaces}
\label{sec:PAdmissible}

Let \( k \) and \( n \)  be positive integers such that \( k\geq n+1 \).
Suppose that \( K_{k,1} \) is the complete bipartite 
graph with vertex sets \( \{1,\cdots,k\} \) and \( \{k+1\} \).
For \( p \in \R^{n \times k}\) 
and \( x \in \R^{n} \), let 
\( p^x  = \begin{pmatrix} p &  x\end{pmatrix}\).

    \begin{defn}\label{defn:Admissibility}
A linear subspace \( S \leq \R^{n \times k} \) will be called {\em 
\( p \)-admissible}
if it has the following properties:
\begin{enumerate}
    \item \( S\cap \mathcal I_p = 0 \).
    \item 
        For almost all \( x \in \R^n \) there is some non-zero 
        \( u \in S \) such that \( u  \) is the restriction 
        to \( p \) of some motion 
        of the framework \( (K_{k,1},p^x) \).
\end{enumerate}
\end{defn}

    One may wonder why we restrict to an open dense subset 
    of \( \R^n \) in the definition of \( p \)-admissibility.
    We do this in order to avoid the complications that
    would arise from degenerate configurations 
    if we had to check the condition for all 
    \( x \in \R^n \). In all applications of this concept 
    that we have in mind, it is sufficient that condition
    (2) above be satisifed on an open dense subset.

In order to motivate Definition \ref{defn:Admissibility}
we consider the following situation. 
Let \( (G,\rho) \) be an isostatic framework in \( \R^n \)
and let \( p \in \R^{n\times k} \) be the first \( k \)
columns of \( \rho \).
We assume that \( V(G) = \{1,\cdots,l\} \) for some \( l\geq k \)
Let \( E' \subset E \) be a set of edges of \( G \) such that
\( k = |E'|+n \) and such that \( V(E') \subset \{1,\cdots,k\} \). 
Since \( (G,
\rho)\) is isostatic the framework \( (G-E',\rho) \) has 
a \( |E'| \) dimensional 
space of flexes \( \overline S \) such that any nontrivial
element of \( \overline S \) restricts to a nontrivial motion of 
\( p\). Thus \( \overline S \) induces a \( |E'| \) dimensional subspace 
\( S \) of \( \R^{n\times k} \) such that \( S\cap \mathcal I_p = 0 \).
It is clear that \( S \) is \( p \)-admissible if and only if 
almost every Henneberg \( |E'| \)-extension 
(of the framework) that deletes
\( E' \) and adjoins a vertex adjacent to \( \{1,\cdots,k\} \) results 
in a non-rigid framework. 
Thus, we see the relevance of understanding \( p \)-admissible subspaces
of \( \R^{3 \times 5} \). In particular, any rigid generic framework 
that admits a \( 2 \)-extension that is not generically rigid 
will give rise to a two dimensional \( p \)-admissible subspace of 
\( \R^{3\times 5}\) where \( p \) is some generic  element of \(
\R^{3 \times 5}\).

\begin{exa}
    \label{exa:AdmissibleExampleOne}
Let \( G \) be the graph obtained by removing a single 
edge from \( K_5 \) and let \( p \) be a generic embedding
of the vertex set into \( \R^3 \). It is well known (\cite{MR804977}
for example) and elementary
that a Henneberg 2-extension of \( (G,p) \) that removes two 
edges that are both incident to the same vertex of degree 3
results in a non-rigid framework. Up to a possible 
permutation of the vertices, the corresponding \( p \)-admissible
space is \( p \)-equivalent, as defined 
in Section \ref{subsec:PointConfigurations}, to the subspace 
\( \R^{3\times 5} \) generated by \( e_{11} \) and \( e_{21} \) (where
\( e_{ij} \) is the matrix with a \( 1 \) in position \( (i,j) \) and
zeroes everywhere else). 
\end{exa}

Indeed, Conjecture \ref{conj:GraverTayWhiteleyConjecture} 
is equivalent to the assertion that, for \( p \in \R^{3 \times 5} \),
the only \( p \)-admissible
subspaces that arise from the deletion of two edges from a generic 
isostatic framework whose vertices include \( p \) are those that are 
\( p \)-equivalent to the one described in
Example \ref{exa:AdmissibleExampleOne}. 
There are however, examples of \( p \)-admissible
spaces that are not equivalent to the one described in Example
\ref{exa:AdmissibleExampleOne}.

\begin{exa}
    \label{exa:AdmissibleExampleTwo}
    Let \( p \in \R^{3 \times 5} \) be in general position.
    Fix some constant \( k \in \R \) and let
    \begin{equation}
        \label{eq:AdmissibleExampleTwo}
        S= \{ u \in \R^{3\times 5}: u_1^T(p_1-p_2) = 0, u_2 = ku_1, u_3=u_4=u_5 = 0\}. \end{equation}
    Then  \( S \) is a two dimensional \( p \)-admissible space.
    One can see this by observing that the structure
    shown in Figure \ref{fig:NonGenericFramework} is not rigid 
    regardless of the position of the point \( x \). Note that in this
    structure the point \( p_2 \) 
    is a fixed point on the bar joining \( p_1 \) and \( y \).
    Thus, the structure is not an example of a 
    generic bar and joint framework.  However, in the context of this 
    example we are only interested in the infinitesimal motion 
    of \( p \) induced by a flex of the structure.
    It is clear the space of infinitesimal flexes of the 
    structure consisting of only the black bars 
    induces a space of motions that is  \( p \)-equvalent
    to the one 
    described by Equation 
    \ref{eq:AdmissibleExampleTwo}.
\end{exa}

\begin{figure}
\definecolor{qqqqff}{rgb}{0,0,1}
\begin{tikzpicture}[line cap=round,line join=round,>=triangle 45,x=1.0cm,y=1.0cm]
\clip(-4.3,-1.1) rectangle (3,6.3);
\draw (-2.48,2.76)-- (-1.38,1.8);
\draw (-1.38,1.8)-- (-0.7,2.48);
\draw (-0.7,2.48)-- (-2.48,2.76);
\draw [red](-2.48,2.76)-- (-0.08,5.08);
\draw [red](0.47,0.98)-- (-0.08,5.08);
\draw [red](-0.08,5.08)-- (-0.7,2.48);
\draw [red](-0.08,5.08)-- (-1.38,1.8);
\draw (-1.38,1.8)-- (-0.68,-0.5);
\draw (-0.7,2.48)-- (-0.68,-0.5);
\draw (-2.48,2.76)-- (-0.68,-0.5);
\draw (-0.68,-0.5)-- (1.24,1.96);
\draw [red](-0.08,5.08)-- (1.24,1.96);
\begin{scriptsize}
\fill [color=qqqqff] (-2.48,2.76) circle (1.5pt);
\draw[color=qqqqff] (-2.74,2.74) node {$p_3$};
\fill [color=qqqqff] (-1.38,1.8) circle (1.5pt);
\draw[color=qqqqff] (-1.56,1.7) node {$p_4$};
\fill [color=qqqqff] (-0.7,2.48) circle (1.5pt);
\draw[color=qqqqff] (-0.44,2.74) node {$p_5$};
\fill [color=qqqqff] (-0.08,5.08) circle (1.5pt);
\draw[color=qqqqff] (0.08,5.34) node {$x$};
\fill [color=qqqqff] (-0.68,-0.5) circle (1.5pt);
\draw[color=qqqqff] (-0.42,-0.49) node {$y$};
\fill [color=qqqqff] (1.24,1.96) circle (1.5pt);
\draw[color=qqqqff] (1.38,2.22) node {$p_1$};
\fill [color=qqqqff] (0.47,0.98) circle (1.5pt);
\draw[color=qqqqff] (0.75,1.0) node {$p_2$};
\end{scriptsize}
\end{tikzpicture}
\begin{caption}{}
\label{fig:NonGenericFramework}
\end{caption}
\end{figure}

One consequence of Example \ref{exa:AdmissibleExampleTwo} is that 
any proof of Conjecture \ref{conj:GraverTayWhiteleyConjecture} must
implicity or explicitly demonstrate that this \( p \)-admissible space
cannot arise by deleting two edges from a generic isostatic framework.
In fact, the situation is even more complicated as 
we will show below that there a many other essentially
inequivalent examples of \( p \)-admissible spaces for generic 
\( p \in \R^{3\times 5}  \). 

\section{\( K_{n,1} \)-frameworks}
\label{sec:KNoneFrameworks}

Observe that \( K_{5,1} \) is the union of two copies 
of \( K_{3,1} \) that have one edge in common. Thus 
in order to understand flexes of \( K_{5,1} \)-frameworks
we should first understand the flexes of 
\( K_{3,1} \)-framework and then consider flexes of 
two \( K_{3,1} \)-frameworks with a common edge.
In this section we will begin that programme by  
deriving some basic results concerning 
\( K_{n,1} \)-frameworks.

Let \( q \in \R^{n\times n}\) be a configuration of \( n \)
points in \( \R^n \) and let \( v \in \R^{n\times n} \)
be an infinitesimal motion of \( q \). 
For a square matrix \( b \), let \( \triangle(b) \) denote the column 
vector whose entries are the diagonal entries of \( b \).
For \( x\in \R^n \) such that 
\( \mathbbm1 x^T - q^T \) is invertible, define 
\[ \mathcal P(x,q,v) = (\mathbbm 1 x^T - q^T)^{-1} (v^Tx - \triangle(v^Tq))\]

We observe that, if \( q \) is an invertible matrix, then 
\( \mathbbm1x^T -q^T \) is invertible if and only if \( x \) 
does not belong to the affine span of the columns of \( q \). Moreover, 
in this case 
\begin{equation}\label{eq:ShermanMorrison} (\mathbbm 1x^T - q^T)^{-1} = -(q^T)^{-1}\left(I + \frac{\mathbbm 1(q^{-1}x)^T}{1- (q^{-1}x)^T\mathbbm 1}\right)\end{equation}
    
Identity (\ref{eq:ShermanMorrison}) is sometimes referred to as the 
Sherman-Morrison formula (see \cite{MR0040068} or \cite{MR0035118}) 
and is elementary to verify. 
The relevance of \( \mathcal P \) to our discussion is apparent from
the following observation.
(Recall that, for \( x \in \R^n \), \( q^x = \begin{pmatrix} q & x \end{pmatrix} \)).
\begin{lem}
    Suppose that \( q \) is an invertible matrix 
    and that \( x \) does not belong to the affine span 
    of the rows of \( q \). Then \( \mathcal P(x,q,v) \) 
    is the unique vector such that \( ( v, 
    \mathcal P(x,q,v) ) \) is a flex of the framework 
    \( (K_{n,1}, q^x) \). 
\end{lem}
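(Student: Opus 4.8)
The plan is to decode exactly what the flex condition for the framework $(K_{n,1}, q^x)$ requires. The new vertex is vertex $n+1$ located at $x$, and it is joined to all $n$ columns of $q$. An infinitesimal flex of $(K_{n,1}, q^x)$ assigns a velocity $v_i$ to each $q_i$ and a velocity $w \in \R^n$ to $x$, subject to the edge conditions $(v_i - w)^T(q_i - x) = 0$ for each $i = 1, \dots, n$. Since the motion restricted to $q$ is prescribed to be $v$, the only unknown is $w$, and I want to show there is a unique $w$ satisfying all $n$ equations, namely $w = \mathcal P(x,q,v)$.

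First I would write the $n$ edge equations as a single linear system in the unknown $w$. Expanding $(v_i - w)^T(q_i - x) = 0$ gives $w^T(q_i - x) = v_i^T(q_i - x) = v_i^T q_i - v_i^T x$ for each $i$. Stacking these over $i$, the left-hand sides are the entries of the matrix-vector product $(q^T - \mathbbm 1 x^T)w$, since the $i$th row of $q^T - \mathbbm 1 x^T$ is $(q_i - x)^T$. For the right-hand sides, $v_i^T q_i$ is precisely the $i$th diagonal entry of $v^T q$, so the vector of these is $\triangle(v^T q)$; and the vector with entries $v_i^T x$ is $v^T x$. Hence the system reads
\[
(q^T - \mathbbm 1 x^T)\, w = \triangle(v^T q) - v^T x.
\]

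Now I would invoke the hypothesis. Because $q$ is invertible and $x$ does not lie in the affine span of the columns of $q$, the preceding discussion (the remark following the definition of $\mathcal P$, together with the Sherman--Morrison identity) guarantees that $\mathbbm 1 x^T - q^T$ is invertible; multiplying through by $-1$ shows $q^T - \mathbbm 1 x^T = -(\mathbbm 1 x^T - q^T)$ is invertible as well. Therefore the system has the unique solution
\[
w = (q^T - \mathbbm 1 x^T)^{-1}\bigl(\triangle(v^T q) - v^T x\bigr) = (\mathbbm 1 x^T - q^T)^{-1}\bigl(v^T x - \triangle(v^T q)\bigr),
\]
where the two negations cancel. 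This is exactly $\mathcal P(x,q,v)$, which both establishes existence and uniqueness of the velocity $w$ making $(v, \mathcal P(x,q,v))$ a flex.

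I do not anticipate a genuine obstacle here; the argument is essentially unwinding definitions and applying the invertibility already supplied. The only point requiring care is the bookkeeping identifying $\triangle(v^T q)$ and $v^T x$ correctly with the stacked right-hand sides, and tracking the sign so that the inverse of $\mathbbm 1 x^T - q^T$ (the form in which $\mathcal P$ is defined) appears rather than that of $q^T - \mathbbm 1 x^T$. I note the statement says ``affine span of the rows of $q$'' whereas the surrounding exposition speaks of the columns; since $q$ is square this hypothesis should be read as the columns, matching the condition under which $\mathbbm 1 x^T - q^T$ is invertible, and I would use that reading throughout.
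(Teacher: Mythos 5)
Your proof is correct and is essentially the paper's own argument: the paper verifies that \( \mathcal P(x,q,v) \) satisfies the \( n \) edge equations and derives uniqueness from the linear independence of the vectors \( x-q_i \), which is precisely the invertibility of your coefficient matrix \( \mathbbm 1 x^T - q^T \), so solving that linear system directly, as you do, is the same computation organized as a derivation rather than a verification. Your decision to read ``rows'' as ``columns'' in the hypothesis is also the right one; that is an inconsistency in the paper's statement of the lemma, and the columns reading is what matches the invertibility criterion stated just before it.
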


\begin{proof}
    An elementary calculation verifies that 
    \[(\mathcal P(x,q,v) - v_i)^T(x-q_i) = 0\]
    for \( i=1,2,\cdots,n \). The uniqueness  statement follows
    from the fact that our hypotheses ensure that
    the vectors  \( x-q_i \), \( i=1,2,\cdots, n \)  are linearly independent.
\end{proof}

Thus, for almost all \( x \) and \( q \), we have 
\begin{equation}
    \mathcal P(x,q,v) =
    -(q^T)^{-1}\left(I + \frac{\mathbbm 1 (q^{-1}x)^T}{1-(q^{-1}x)^T\mathbbm1}
    \right) (v^Tx - \triangle (v^Tq))
    \label{eq:PFormula}
\end{equation}

In order to understand the flexes of a \( K_{5,1} \)-framework, we 
will have to consider solutions of an equation of the form
\[ \mathcal P(x,q,v) = \mathcal P(x,r,w) \] where 
\( q \) and \( r \) are three point configurations with a common 
point and \( v \) and \( w \) are motions that agree on the 
common vertex. This leads to rather complicated equations. So
we will also consider a certain limit
associated to the function \( \mathcal P \).
This argument is somewhat inspired by the viewpoint of coarse
geometry - that is that certain geometric problems can 
be simplified by looking at them from `far away'. 
Define  
\[ \mathcal L (x,q,v) = \lim_{t\rightarrow +\infty} \frac{\mathcal P(tx,q,v) }{t}.\]
\begin{lem}
    Suppose that \( q \) is invertible and that \( (q^{-1}x)^T\mathbbm1
    \neq 0\). Then \( \mathcal L(x,q,v) \) is defined and
    \begin{equation}
        \mathcal L(x,q,v) = -(q^T)^{-1}\left(I - 
        \frac{\mathbbm1 (q^{-1}x)^T}{(q^{-1}x)^T\mathbbm1}\right)
        v^Tx
        \label{eq:LFormula}
    \end{equation}
\end{lem}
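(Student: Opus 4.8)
The plan is to prove this by direct substitution into the explicit formula for $\mathcal P$ given in Equation (\ref{eq:PFormula}) and then evaluate the limit defining $\mathcal L$.

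First I would check that $\mathcal P(tx,q,v)$ is actually defined for all sufficiently large $t$, so that the limit makes sense. Since $q$ is invertible, the earlier observation tells us that $\mathbbm1(tx)^T - q^T$ is invertible precisely when $tx$ is not in the affine span of the columns of $q$; unwinding this via the Sherman--Morrison formula (\ref{eq:ShermanMorrison}), invertibility is equivalent to $t(q^{-1}x)^T\mathbbm1 \neq 1$. Because the hypothesis gives $(q^{-1}x)^T\mathbbm1 \neq 0$, this excludes only the single value $t = ((q^{-1}x)^T\mathbbm1)^{-1}$, so $\mathcal P(tx,q,v)$ is defined for all large $t$ and Equation (\ref{eq:PFormula}) applies.

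Next I would substitute $tx$ for $x$ in (\ref{eq:PFormula}), using $q^{-1}(tx) = t\,q^{-1}x$, and divide by $t$. The resulting expression factors as a product of a rational matrix-valued function of $t$ and the vector $t^{-1}(tv^Tx - \triangle(v^Tq))$. The point is that the $t$ cancels cleanly: the scalar factor $v^Tx$ supplies the leading linear growth, while $t^{-1}\triangle(v^Tq) \to 0$, so the second factor tends to $v^Tx$. For the matrix factor I would compute
\[
    \lim_{t\to+\infty} \frac{t(q^{-1}x)^T}{1 - t(q^{-1}x)^T\mathbbm1}
    = -\frac{(q^{-1}x)^T}{(q^{-1}x)^T\mathbbm1}
\]
by dividing numerator and denominator by $t$; here the hypothesis $(q^{-1}x)^T\mathbbm1 \neq 0$ is exactly what guarantees the limit exists and is finite. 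Consequently $I + \frac{\mathbbm1\,t(q^{-1}x)^T}{1 - t(q^{-1}x)^T\mathbbm1}$ tends to $I - \frac{\mathbbm1(q^{-1}x)^T}{(q^{-1}x)^T\mathbbm1}$. Combining these two limits with the constant factor $-(q^T)^{-1}$ yields Equation (\ref{eq:LFormula}).

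I do not expect any serious obstacle, since the argument is a careful but routine limit computation. The only subtlety worth flagging is the role of the hypothesis $(q^{-1}x)^T\mathbbm1 \neq 0$. If this quantity vanished, the denominator $1 - t(q^{-1}x)^T\mathbbm1$ would be identically $1$ while the numerator $\mathbbm1\,t(q^{-1}x)^T$ would grow without bound, so $\mathcal P(tx,q,v)/t$ would diverge; thus the hypothesis is precisely the condition needed both for $\mathcal L$ to be well defined and for the stated formula to hold.
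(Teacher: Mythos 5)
Your proposal is correct and follows exactly the route the paper intends: the paper's proof is the one-line remark that the lemma ``follows easily by combining Equation (\ref{eq:PFormula}) with the definition of \( \mathcal L(x,q,v) \)'', and your substitution of \( tx \) into (\ref{eq:PFormula}), division by \( t \), and term-by-term limit is precisely that computation, with the additional (welcome) check that \( \mathcal P(tx,q,v) \) is defined for all large \( t \). The only quibble is your closing side remark: when \( (q^{-1}x)^T\mathbbm1 = 0 \) the quotient \( \mathcal P(tx,q,v)/t \) diverges only if \( (q^{-1}x)^Tv^Tx \neq 0 \), so divergence is generic rather than automatic -- but this does not affect the proof of the lemma as stated.
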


\begin{proof}
    This follows easily by combining Equation (\ref{eq:PFormula})
    with the definition of \( \mathcal L(x,q,v) \).
\end{proof}

Note that \( (q^{-1}x)^T\mathbbm1 = 0  \) if and only if \( x  \)
belongs to the linear subspace that is parallel to the affine
span of the columns of \( q \).

While Equation (\ref{eq:LFormula}) may seem rather complicated still,
it is worth noting that the matrix \(  I - 
\frac{\mathbbm1 (q^{-1}x)^T}{(q^{-1}x)^T\mathbbm1}\) is just the matrix of the 
projection  parallel to \( \mathbbm1 \) onto the orthogonal 
complement of \( (q^{-1}x)^T \). Thus
the geometry of \( \mathcal L \) is more transparent than that
of \( \mathcal P \).

We also note that, given \( x \), 
\( \mathcal L(x,q,v) \in x^\perp \) for all 
\( q \) and \( v \). Thus, if \( \pi \)
is a fixed linear mapping on \( \R^n \) of rank \( n-1 \), then
for almost all \( x \), \( \mathcal L
(x,q,v)\) is determined by \( x \) and 
\( \pi(\mathcal L(x,q,v)) \). 
By contrast, it is not possible to 
infer that \( \mathcal P(x,q,v) \) belongs to any specific two
dimensional subspace of \( \R^{3} \) that depends only 
on \( x \).

\section{\( p \)-admissibility for 5 points in \( \R^3 \)}
\label{sec:FivePoints}

In this section we will use the results of the previous section 
to analyse  two dimensional \( p \)-admissible spaces where \( p \)
is a five point configuration in \( \mathbb R^3 \). Given that 
we are restricting our attention to such a special situation we
find it appropriate to introduce the following special notational
conventions.
For \( p \in \R^{3 \times 5} \), \( q \) is the matrix obtained by deleting
the second and third columns of \( p \), 
whereas \( r \) is the matrix obtained by 
deleting the fourth and fifth columns  of \( p \).
Similarly for \( u \in \R^{3 \times 5} \), \( v \) is the matrix obtained by 
deleting the second and third columns of \( u \), 
whereas \( w \) is the matrix
obtained by deleting the fourth and fifth columns of \( u \).

\begin{lem}
    \label{lem:LinearMotionLemmaOne}
    Let  \( p,u \in \R^{3 \times 5} \).
    Then \( u \) is a linear motion of \( p \) if and only 
    if there is some \( m \in \R^{3\times 3} \) such that
    \( v = mq \) and \( w = mr \). In particular,
    if \( q \) is invertible then \( u \) is a linear
    motion of \( p \) if and only if \( w = vq^{-1}r \).
\end{lem}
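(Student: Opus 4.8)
The plan is simply to unwind the definition of a linear motion and then exploit the block structure of the column-deletion notation. Recall that, by definition, $u$ is a linear motion of $p$ precisely when it is the restriction to $p$ of a linear field $x \mapsto mx$ for some $m \in \R^{3 \times 3}$; that is, $u_i = m p_i$ for every $i$, which is the single matrix equation $u = mp$.

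Next I would observe that the equation $u = mp$ can be read off column by column. The columns of $q$ are $p_1, p_4, p_5$ and the columns of $v$ are $u_1, u_4, u_5$, so the assertion $v = mq$ is exactly the conjunction $u_1 = m p_1$, $u_4 = m p_4$, $u_5 = m p_5$. Likewise $w = mr$ is exactly $u_1 = m p_1$, $u_2 = m p_2$, $u_3 = m p_3$. Since the five columns $p_1, \ldots, p_5$ are jointly covered by these two blocks, with the shared first column giving the same condition $u_1 = m p_1$ in both, the pair of equations $v = mq$ and $w = mr$ holds for a given $m$ if and only if $u = mp$ holds for that same $m$. This establishes the first equivalence.

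For the final assertion, suppose $q$ is invertible. If $u$ is a linear motion, pick $m$ with $v = mq$ and $w = mr$; multiplying $v = mq$ on the right by $q^{-1}$ gives $m = vq^{-1}$, and substituting into $w = mr$ yields $w = vq^{-1}r$. Conversely, if $w = vq^{-1}r$, set $m = vq^{-1}$; then $mq = v$ and $mr = vq^{-1}r = w$, so by the first equivalence $u = mp$ and $u$ is a linear motion.

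The argument is entirely routine linear algebra, so I do not anticipate any genuine obstacle. The only point requiring a moment's care is the consistency of the two blocks on the shared column $p_1$, but both $v = mq$ and $w = mr$ impose the identical condition $u_1 = m p_1$ there, so no incompatibility can arise and the single matrix $m$ is well defined by the two block equations together.
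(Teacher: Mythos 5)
Your proof is correct and follows essentially the same route as the paper: the paper's own proof consists of the single observation that \( u \) is a linear motion of \( p \) if and only if \( u_i = mp_i \) for some \( m \in \R^{3\times 3} \), leaving the column-by-column bookkeeping and the \( m = vq^{-1} \) substitution implicit. You have simply written out those routine details, including the harmless overlap of the two blocks at the shared first column.
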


\begin{proof}
    By definition \( u \) is a linear motion of \( p \) 
    if and only if there 
    is some \( m \in \R^{3 \times 3} \) such that 
    \( u_i = mp_i \). 
\end{proof}

\begin{lem}
    \label{lem:pEquivalenceLemma}
    Suppose that \( u,u'\in \R^{3 \times 5} \) are motions
    of \( p \in \R^{3 \times 5} \). Then \( u \) and \( u' \)
    are \( p \)-equivalent if and only if there is some 
    skew symmetric matrix \( a\in \R^{3 \times 3} \)
    and some \( t \in \R^3 \) such that 
    \( v + aq+t\mathbbm1^T = v' \) and \( w+ ar+t\mathbbm1^T = w' \)
\end{lem}

\begin{proof}
    This follows from the definition of \( p \)-equivalence and
    Lemma \ref{lem:SkewSymmetricsAndTranslations}.
\end{proof}

%\subsection{A sufficient condition for \( p \)-admissibility}

Now we come to the main technical results of the paper.
For the rest of this section we assume that both 
\( q \) and \( r \) are invertible.
First we recharacterise \( p \)-admissibility of a subspace \( S \)
of \( \R^{3\times 5} \).
\begin{prop}
    \label{prop:PadmissibilityPropOne}
    Let \( S \) be a two dimensional subspace of \( \R^{3 \times 5} \)
    such that \( S\cap \mathcal I_p = 0 \). Then \( S \) is 
    \( p \)-admissible if and only if the linear mapping 
    \( h_x: S \to \R^{3} \) defined by 
    \( h_x(u) = \mathcal P(x,q,v) - P(x,r,w) \) has rank 
    at most one for almost all \( x \in \R^3\).
\end{prop}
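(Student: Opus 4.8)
The plan is to translate condition (2) of Definition \ref{defn:Admissibility} into the stated rank condition, the whole argument being essentially a bookkeeping of which flex equations each of \( q \) and \( r \) controls. First I would unwind what it means for \( u \in S \) to be the restriction to \( p \) of a motion of \( (K_{5,1}, p^x) \). Writing such a motion as a pair \( (u,z) \) with \( u \in \R^{3 \times 5} \) its restriction to \( p \) and \( z \in \R^3 \) the velocity at the apex \( x \), the flex equations (\ref{eq:FlexDefinition}) for the five edges \( \{i,6\} \) read \( (u_i - z)^T(p_i - x) = 0 \) for \( i = 1,\dots,5 \). Thus \( u \) is such a restriction precisely when there is a single \( z \) solving all five of these equations.

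Next I would exploit the splitting of these equations induced by the notational conventions of this section: the columns of \( q \) are \( p_1, p_4, p_5 \) with associated velocities \( v = (u_1, u_4, u_5) \), while the columns of \( r \) are \( p_1, p_2, p_3 \) with velocities \( w = (u_1, u_2, u_3) \). Hence the three equations for \( i \in \{1,4,5\} \) are exactly those characterising the unique flex-completion velocity \( \mathcal P(x,q,v) \), and those for \( i \in \{1,2,3\} \) characterise \( \mathcal P(x,r,w) \); here I would invoke the uniqueness lemma defining \( \mathcal P \), valid whenever \( q \) (resp. \( r \)) is invertible and \( x \) lies off the affine span of its columns. Each set of equations forces its own value of \( z \), so a common solution of all five exists if and only if \( \mathcal P(x,q,v) = \mathcal P(x,r,w) \), that is, if and only if \( h_x(u) = 0 \). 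Restricting to the open dense set of \( x \) avoiding the two affine spans (nonempty and dense since \( q \) and \( r \) are invertible), this shows that for almost all \( x \), a vector \( u \in S \) is the restriction of a motion of \( (K_{5,1}, p^x) \) if and only if \( u \in \ker h_x \).

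It then remains to package this as a rank statement. Condition (2) asserts that for almost all \( x \) there is a \emph{nonzero} such \( u \in S \), which by the previous step is the assertion that \( \ker h_x \neq 0 \) for almost all \( x \). Since \( h_x \colon S \to \R^3 \) is linear (the formula for \( \mathcal P \) is linear in the entries of \( v \), hence of \( u \)) and \( \dim S = 2 \), the map \( h_x \) has nontrivial kernel if and only if \( \operatorname{rank} h_x \leq 1 \). Combined with the standing hypothesis \( S \cap \mathcal I_p = 0 \), this is exactly the definition of \( p \)-admissibility, and both implications fall out at once.

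The step I expect to require the most care is the matching of the two ``almost all'' quantifiers. For \( x \) off the two affine spans the entries of \( h_x \) are rational functions of \( x \), so the locus where \( \operatorname{rank} h_x \leq 1 \) is cut out by the vanishing of the \( 2 \times 2 \) minors and is naturally an open-dense-type condition; I would check that the open dense set on which the kernel characterisation holds can be intersected with the open dense set implicit in Definition \ref{defn:Admissibility} without loss, so that ``there is a nonzero \( u \in \ker h_x \) for almost all \( x \)'' and ``\( \operatorname{rank} h_x \leq 1 \) for almost all \( x \)'' are literally the same statement. No deeper difficulty seems to arise: once the flex equations are correctly partitioned between \( q \) and \( r \), the equivalence is forced by the uniqueness of \( \mathcal P \).
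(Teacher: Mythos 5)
Your proposal is correct and takes essentially the same approach as the paper's own proof: both reduce condition (2) of Definition \ref{defn:Admissibility} to the existence of a nonzero \( u \in S \) with \( \mathcal P(x,q,v) = \mathcal P(x,r,w) \), via the uniqueness of the flex-completing velocity at the apex, and then use \( \dim S = 2 \) to identify ``\( h_x \) has nontrivial kernel'' with ``\( \operatorname{rank} h_x \leq 1 \)''. The paper compresses this into two sentences where you spell out the partition of the five edge equations between \( q \) and \( r \) and the matching of the two ``almost all'' quantifiers, but the mathematical content is identical.
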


\begin{proof}
    Observe that the linear mapping \( h_x \) defined in the 
    statement has rank at most one if and only if there 
    is some non-zero \( u \in S \) such that 
    \begin{equation}
        \label{eq:PEqualityCondition}
        \mathcal P(x,q,v) = \mathcal P(x,r,w) 
    \end{equation}
    (using the notational
    convention specified above). But Equation 
    (\ref{eq:PEqualityCondition}) is true if and only if 
    \( \begin{pmatrix} u & \mathcal P(x,q,v) \end{pmatrix} \)
        is a flex of \( (K_{5,1},p^x) \).
\end{proof}

Proposition \ref{prop:PadmissibilityPropOne} 
allows us to give a sufficient condition
for \( p \)-admissibility. 

\begin{prop}
    \label{prop:AdmissibilitySufficientCondition}
    Suppose that \( S \) is a two dimensional 
    subspace of \( \R^{3\times 5} \) with the following three properties.
    \begin{enumerate}
        \item[A1.] \( S\cap \mathcal I_p = 0 \)
        \item[A2.] Every \( u\in S \) is a linear motion of \( p \).
        \item[A3.] For all \( u \in S \) we have \( (q^T)^{-1}\triangle(v^Tq) =
                (r^T)^{-1}\triangle(w^Tr)\)
    \end{enumerate}
    Then \( S \) is \( p \)-admissible. 
\end{prop}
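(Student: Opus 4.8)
The plan is to use Proposition \ref{prop:PadmissibilityPropOne}, so it suffices to show that under hypotheses A1--A3 the map $h_x(u) = \mathcal P(x,q,v) - \mathcal P(x,r,w)$ has rank at most one for almost all $x$. First I would substitute the explicit formula \eqref{eq:PFormula} for $\mathcal P$ into $h_x(u)$. The idea is that A2 and A3 are precisely the two hypotheses needed to make the two potentially troublesome pieces of $\mathcal P(x,q,v)$ and $\mathcal P(x,r,w)$ cancel or collapse. Writing $\mathcal P(x,q,v) = -(q^T)^{-1}\bigl(I + \tfrac{\mathbbm 1(q^{-1}x)^T}{1-(q^{-1}x)^T\mathbbm 1}\bigr)(v^Tx - \triangle(v^Tq))$, I would split this into the part coming from $v^Tx$ and the part coming from $\triangle(v^Tq)$, and do likewise for $r,w$.

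The key computation is to handle these two parts separately. For the $\triangle(v^Tq)$ and $\triangle(w^Tr)$ parts, hypothesis A3 says the leading terms $(q^T)^{-1}\triangle(v^Tq)$ and $(r^T)^{-1}\triangle(w^Tr)$ are equal; I expect that, after also accounting for the rank-one correction terms built from $\mathbbm 1$, the entire $\triangle$-contribution to $h_x(u)$ either cancels or lies in the fixed line $\R\,(q^T)^{-1}\mathbbm 1 = \R\,(r^T)^{-1}\mathbbm 1$ — here one should check that A3 in fact forces these two vectors to be parallel, or massage the denominators so the residual is a scalar multiple of a single $x$-independent vector. For the $v^Tx$ and $w^Tx$ parts, hypothesis A2 via Lemma \ref{lem:LinearMotionLemmaOne} gives $v = mq$ and $w = mr$ for a common $m$, so $v^Tx = q^Tm^Tx$ and $w^Tx = r^Tm^Tx$; substituting, the factors of $(q^T)^{-1}$ and $(r^T)^{-1}$ should cancel against $q^T$ and $r^T$, leaving a common term $-m^Tx$ that is independent of which configuration we used, plus rank-one correction terms each proportional to $(q^T)^{-1}\mathbbm 1$, respectively $(r^T)^{-1}\mathbbm 1$.

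Assembling the pieces, I expect $h_x(u)$ to reduce, for almost all $x$, to an expression of the form $\alpha(x,u)\,\eta$ where $\eta$ is a single vector not depending on $u$ (the natural candidate being $(q^T)^{-1}\mathbbm 1$, which A3-type parallelism should identify with $(r^T)^{-1}\mathbbm 1$ up to scalar) and $\alpha(x,u)$ is scalar and linear in $u$. Since the image of $h_x$ then lies in the fixed line $\R\eta$, the map has rank at most one, which is exactly what Proposition \ref{prop:PadmissibilityPropOne} requires. I would finish by invoking A1 to guarantee $S \cap \mathcal I_p = 0$, completing the verification of Definition \ref{defn:Admissibility}.

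The main obstacle will be the bookkeeping in the $\triangle$-part: it is not immediately obvious that A3 alone forces the two vectors $(q^T)^{-1}\mathbbm 1$ and $(r^T)^{-1}\mathbbm 1$ to be parallel, nor that the $\mathbbm 1$-correction terms with their different denominators $1-(q^{-1}x)^T\mathbbm 1$ and $1-(r^{-1}x)^T\mathbbm 1$ combine into something lying in a single $x$-independent line rather than a two-dimensional span. The delicate point is to verify that all residual contributions to $h_x(u)$, after cancellation, are scalar multiples of one common vector; if instead they spanned a plane, the rank could be two and the argument would fail. I would therefore pay close attention to showing that the common point shared by $q$ and $r$ (columns $1$ of each, under the conventions of Section \ref{sec:FivePoints}) forces the relevant correction vectors to align.
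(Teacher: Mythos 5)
You follow the paper's route: reduce to Proposition \ref{prop:PadmissibilityPropOne}, expand \( h_x \) via Equation (\ref{eq:PFormula}), use A2 through Lemma \ref{lem:LinearMotionLemmaOne} to cancel the linear terms and A3 to cancel the \( \triangle \)-terms. However, the step you yourself single out as the main obstacle is a genuine gap, and both ways you propose to close it fail. A3 cannot force \( (q^T)^{-1}\mathbbm 1 \) and \( (r^T)^{-1}\mathbbm 1 \) to be parallel: it is a condition on the motions \( v,w \), not on the configuration; moreover these two vectors are parallel only if they are equal, since \( q_1=r_1 \) gives \( q_1^T(q^T)^{-1}\mathbbm 1 = q_1^T(r^T)^{-1}\mathbbm 1 = 1 \), and they are unequal for generic \( p \) (this independence is even used later, in the proof of Proposition \ref{prop:OneDimAdmissible}). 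Nor can the residual be massaged into an \( x \)-independent line: the vector
\[
\eta_x = \frac{(r^T)^{-1}\mathbbm 1}{1-(r^{-1}x)^T\mathbbm 1} - \frac{(q^T)^{-1}\mathbbm 1}{1-(q^{-1}x)^T\mathbbm 1}
\]
has direction that genuinely varies with \( x \) (its two coefficients have non-constant ratio), so the lines \( \R\,\eta_x \) do not all coincide and no single \( x \)-independent line can contain the image of every \( h_x \).

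The missing idea is that no \( x \)-independence is needed: Proposition \ref{prop:PadmissibilityPropOne} asks, for each fixed \( x \) separately, that the linear map \( h_x \) on \( S \) have rank at most one, so a line depending on \( x \) but not on \( u \) suffices. That is exactly what A2 and A3 jointly produce. Expanding (\ref{eq:PFormula}), the \( q \)-correction term is \( -(q^T)^{-1}\mathbbm 1\,\sigma_q/\bigl(1-(q^{-1}x)^T\mathbbm 1\bigr) \) with \( \sigma_q = x^T(vq^{-1})^Tx-(q^{-1}x)^T\triangle(v^Tq) \), and similarly for \( r \) with \( \sigma_r \). By A2 we have \( vq^{-1}=wr^{-1} \), so the quadratic parts of \( \sigma_q \) and \( \sigma_r \) agree; by A3 we have \( (q^{-1}x)^T\triangle(v^Tq) = x^T(q^T)^{-1}\triangle(v^Tq) = x^T(r^T)^{-1}\triangle(w^Tr) = (r^{-1}x)^T\triangle(w^Tr) \), so the remaining parts agree as well. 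Hence \( \sigma_q=\sigma_r \), call it \( \sigma(x,u) \), and after the cancellations you already identified,
\[
h_x(u) = \sigma(x,u)\,\eta_x .
\]
Since \( \eta_x \) is independent of \( u \) and \( \sigma(x,u) \) is a scalar, \( h_x \) has rank at most one wherever it is defined, and Proposition \ref{prop:PadmissibilityPropOne} (together with A1, which its hypothesis requires) concludes. This single formula is the paper's entire proof; your decomposition was correct, and what was missing was only the reinterpretation of what ``rank at most one'' actually demands.
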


\begin{proof}
    By Lemma \ref{lem:LinearMotionLemmaOne},
    \( vq^{-1} = wr^{-1} \) for all \( u \in S \).
    Thus, combining Equation (\ref{eq:PFormula})
    with the definition of \( h_x \), we see that
    \[ h_x(u) = 
    \left(
    \frac{(r^T)^{-1}\mathbbm 1}{1-(r^{-1}x)^T\mathbbm 1}
    -
    \frac{(q^T)^{-1}\mathbbm 1}{1-(q^{-1}x)^T\mathbbm 1}
    \right)
    \left( x^T(vq^{-1})^Tx - (q^{-1}x)^T\triangle(v^Tq)\right).
    \]
Therefore \( h_x \) has rank at most 
one for all \( x \) for which it is defined
and we can conclude, by Proposition \ref{prop:PadmissibilityPropOne},
that \( S \) is \( p \)-admissible.
\end{proof}

Property A3 of Proposition \ref{prop:AdmissibilitySufficientCondition}
is a rather mysterious looking property, however we will see 
in Theorem \ref{thm:FivePointsAlwaysHaveAdmissibleMotionSpaces} below that
it has an interesting consequence for any potential proof 
of Conjecture \ref{conj:GraverTayWhiteleyConjecture}.
Now we derive a necessary condition for \( p \)-admissibility.

\begin{thm}
    \label{thm:MainTecchnicalResult}
    Let \( p \in \R^{3 \times 5} \) be a 
    \( 5 \) point configuration such that 
    \( q \) and \( r \) are invertible and 
    such that \( (r^{-1}q)^T\mathbbm1 \neq \mathbbm1 \).
    Suppose that \( S \)
    is a two dimensional 
    \( p \)-admissible subspace of \( \R^{3 \times 5} \).
    Then at least one of the following statements is true.
    \begin{enumerate}
        \item Every element of \( S \) is an affine motion
            of \( p \).
        \item \( S \) is \( p \)-equivalent to a subspace
            \( S' \) where \( S' = Ez^T \) for some 
            two dimensional subspace \( E \) of \( \R^{3} \)
            and some \( z \in \R^{5} \).
    \end{enumerate}
\end{thm}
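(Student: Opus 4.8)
The plan is to pass from the finite difference map \( h_x \) to its coarse limit and then exploit the transparent projection geometry of \( \mathcal L \). By Proposition \ref{prop:PadmissibilityPropOne}, \( p \)-admissibility of \( S \) means \( h_x \) has rank at most one for almost all \( x \). Since \( \mathcal L(x,q,v) = \lim_{t\to\infty}\mathcal P(tx,q,v)/t \), the map \( \ell_x\colon S\to\R^3 \), \( \ell_x(u) = \mathcal L(x,q,v) - \mathcal L(x,r,w) \), satisfies \( \ell_x(u) = \lim_{t\to\infty} h_{tx}(u)/t \). Because the set of rank-at-most-one linear maps \( S\to\R^3 \) is closed (it is the common zero locus of the \( 2\times 2 \) minors), I would first conclude that \( \ell_x \) also has rank at most one for almost all \( x \). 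This replaces the unwieldy \( \mathcal P \) by formula (\ref{eq:LFormula}).

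Writing \( a = (q^T)^{-1}\mathbbm 1 \) and \( b = (r^T)^{-1}\mathbbm 1 \), a short manipulation of (\ref{eq:LFormula}) gives
\[ \ell_x(u) = M_u x + \frac{Q_q(x)}{D_q}\,a - \frac{Q_r(x)}{D_r}\,b, \]
where \( M_u = (r^T)^{-1}w^T - (q^T)^{-1}v^T \), \( D_q = \mathbbm 1^T q^{-1}x \), \( D_r = \mathbbm 1^T r^{-1}x \), and \( Q_q, Q_r \) are the quadratic forms \( x^T(vq^{-1})^Tx \) and \( x^T(wr^{-1})^Tx \). The hypothesis \( (r^{-1}q)^T\mathbbm 1\neq\mathbbm 1 \) is exactly the statement \( a\neq b \), and I would treat the principal case where \( a,b \) are linearly independent, so that \( c := a\times b \) is nonzero. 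Since \( \mathcal L(x,q,v),\mathcal L(x,r,w)\in x^\perp \) we have \( \ell_x(u)\in x^\perp \); hence on the two-dimensional domain \( S \) the rank-at-most-one condition collapses to the single scalar identity \( \det\!\left(x,\ell_x(u^{(1)}),\ell_x(u^{(2)})\right)\equiv 0 \) for a basis \( u^{(1)},u^{(2)} \) of \( S \), which after clearing denominators is one polynomial identity in \( x \).

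The decisive object is the linear functional \( u\mapsto L_u := c^T M_u x \), which is the component of \( \ell_x(u) \) along \( c \) (recall \( a,b\perp c \)); one computes \( c^T M_u x = (w r^{-1}c - v q^{-1}c)^T x \), and \( L_u\equiv 0 \) precisely when \( \operatorname{im} M_u\subseteq\operatorname{span}(a,b) \). I would split on whether \( L_u\equiv 0 \) for all \( u\in S \). If so, then \( \ell_x(u) \) lies in the fixed line \( x^\perp\cap\operatorname{span}(a,b) \) for every \( u \), so the rank condition holds automatically and gives no information. If instead some \( L_u\not\equiv 0 \), then for almost all \( x \) the kernel of \( u\mapsto L_u \) and the kernel of \( \ell_x \) are equal one-dimensional subspaces of \( S \) (a vector of \( \ell_x(S) \) lies in \( \operatorname{span}(a,b) \) only if it is zero), yielding the clean identity
\[ L_{u^{(2)}}\,\ell_x(u^{(1)}) = L_{u^{(1)}}\,\ell_x(u^{(2)}). \]
Extracting the \( a \)- and \( b \)-components (after clearing \( D_qD_r \)) produces relations of the shape \( L_{u^{(2)}}(D_qG_1+\Delta Q_{q,1}) = L_{u^{(1)}}(D_qG_2+\Delta Q_{q,2}) \) for suitable linear forms \( G_i \) and a nonzero constant \( \Delta \); coprimality of the linear forms \( L_{u^{(1)}},L_{u^{(2)}} \) then forces the quadratic forms \( Q_q,Q_r \) to factor through the \( L_u \) and forces all \( M_u \) to share a common column direction. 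Translating these constraints back through Lemmas \ref{lem:LinearMotionLemmaOne} and \ref{lem:pEquivalenceLemma}, I would identify \( S \), up to \( p \)-equivalence, with a subspace \( Ez^T \), giving conclusion (2).

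The main obstacle is the degenerate branch \( L_u\equiv 0 \) for all \( u \). There the coarse limit sees nothing, since \( \ell_x \) is confined to a line for purely dimensional reasons and the quadratic data \( Q_q,Q_r \) are left entirely unconstrained by the \( \mathcal L \)-analysis; by Lemma \ref{lem:pEquivalenceLemma} one can reduce to \( M_u = a\psi_u^T \), with \( u \) affine iff \( \psi_u\equiv 0 \), but separating the genuinely affine spaces (conclusion (1)) from the rank-one ones (conclusion (2)) appears to require returning to the finer map \( \mathcal P \), equivalently a subleading term of \( \mathcal P(tx,q,v)/t \), which reintroduces precisely the polynomial complications that passing to \( \mathcal L \) was meant to avoid. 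A secondary difficulty, present in both branches, is upgrading structural information about the \( M_u \) and the quadratic forms to an honest \( p \)-equivalence of the whole space \( S \) with \( Ez^T \): since \( M_u \) modulo \( (b-a)t^T \) does not determine \( u \) up to \( p \)-equivalence, one must verify that the residual data align, for which the explicit description of \( p \)-equivalence in Lemma \ref{lem:pEquivalenceLemma} is indispensable.
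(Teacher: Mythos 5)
Your overall strategy coincides with the paper's (pass to the coarse limit \( \mathcal L \), read admissibility as a rank-at-most-one condition, and resolve the resulting pointwise dependence by polynomial factorization), but the proposal has a genuine gap, and it sits exactly where you placed your ``main obstacle'': the branch \( L_u\equiv 0 \) for all \( u\in S \) needs no return to \( \mathcal P \) and no control of the quadratic data at all. You overlooked the identity \( q_1^TM_u = 0 \), valid for \emph{every} motion \( u \), which follows from the fact that the first columns of \( q \) and \( r \) are both \( p_1 \) and the first columns of \( v \) and \( w \) are both \( u_1 \): indeed \( q_1^T(q^T)^{-1}v^T = (q^{-1}q_1)^Tv^T = v_1^T \) and \( q_1^T(r^T)^{-1}w^T = (r^{-1}r_1)^Tw^T = w_1^T \), so \( q_1^TM_u = w_1^T - v_1^T = 0 \), i.e.\ \( \operatorname{im}(M_u)\subseteq q_1^\perp \) always. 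Moreover \( q_1^Ta = q_1^Tb = 1 \) (which, incidentally, also shows your ``principal case'' is the only case: \( a\neq b \) with equal \( q_1 \)-components forces linear independence), so \( \operatorname{span}(a,b)\cap q_1^\perp = \langle b-a\rangle \). Hence in your degenerate branch \( \operatorname{im}(M_u)\subseteq\langle b-a\rangle \), i.e.\ \( M_u = (b-a)t^T \) for some \( t \); and since \( M_{u+t\mathbbm 1^T} = M_u + (b-a)t^T \), Lemma \ref{lem:LinearMotionLemmaOne} shows that \( M_u=(b-a)t^T \) is \emph{precisely} the condition that \( u \) be an affine motion of \( p \). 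So that branch is verbatim conclusion (1) of the theorem: affineness is detected by the linear part \( M_u \) alone, and \( Q_q,Q_r \) are irrelevant there. In your normalization \( M_u = a\psi_u^T \) the same identity gives \( \psi_u^T = q_1^TM_u = 0 \) instantly. This identity is also the engine of the paper's proof: it is what licenses projecting onto \( q_1^\perp \) with kernel \( \langle a\rangle \), after which translations normalize each \( M_{u^i} \) to \( d(k^i)^T \), and the case \( l_1\equiv l_2\equiv 0 \) of the paper's factorization lemma (its case (2')) is exactly your branch, disposed of by Lemma \ref{lem:LinearMotionLemmaOne}.

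There is a second, fixable, gap in your main branch: the factorization step silently assumes the linear forms \( L_{u^{(1)}}, L_{u^{(2)}} \) are coprime, i.e.\ non-proportional, and this can fail. If they are proportional, change basis so that \( L_{u^{(2)}}\equiv 0\neq L_{u^{(1)}} \); your own kernel identity then forces \( \ell_x(u^{(2)}) = 0 \) for almost all \( x \). Clearing denominators, comparing components along \( a \), \( b \) and a third direction, and using \( q_1^TM_{u^{(2)}}=0 \) once more, one finds \( M_{u^{(2)}} = (b-a)\tilde\alpha^T \) together with \( Q_{q,2} = -D_q(\tilde\alpha^Tx) \) and \( Q_{r,2} = -D_r(\tilde\alpha^Tx) \); adding the translation \( \tilde\alpha\mathbbm 1^T \) then makes \( vq^{-1} \) and \( wr^{-1} \) equal and skew symmetric, so \( u^{(2)} \) is a nonzero element of \( S\cap\mathcal I_p \) --- contradicting admissibility. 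This is how the paper eliminates the corresponding possibility (its case (1')); without this sub-case your coprimality argument does not cover all of the non-degenerate branch, and the remaining translation of the factorization data into an honest \( p \)-equivalence \( S\sim Ez^T \) (the paper's case (3'), via Lemma \ref{lem:pEquivalenceLemma}) is left as a sketch.
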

      
Note that the requirements on \( p \) are, in particular, fulfilled 
by any generic point configuration.
The remainder of this section of the paper is devoted to proving 
Theorem \ref{thm:MainTecchnicalResult}. The strategy is as follows.
Proposition \ref{prop:PadmissibilityPropOne} allows us to 
express \( p \)-admissibility in terms of \( \mathcal P(-,q,-) \) 
and \( \mathcal P(-,r,-) \). Then a limiting argument (Lemma 
\ref{lem:PLemmaThree}) allows us to 
obtain a condition on the functions \( \mathcal L(-,q,-) \)
and \( \mathcal L(-,r,-) \) which is necessary for the 
space \( S \) to be \( p \)-admissible. 
We then express this condition in terms 
of basis elements for the space \( S \) and this allows us to reformulate
in terms of linear dependence properties of a certain
pair of polynomial functions \( \R^2 \to \R^2 \). We then
derive an elementary characterisation of 
such pairs (Lemma \ref{lem:AffinePolyProp} below).
An application of this 
result allows us to deduce the conclusion of Theorem 
\ref{thm:MainTecchnicalResult}.

\begin{lem}
    \label{lem:PLemmaThree}
    For almost all \( x \in \R^3 \)
    there is some non-zero \( u \in S \)
    such that \( \mathcal L(x,q,v)=\mathcal L(x,r,w) \)
\end{lem}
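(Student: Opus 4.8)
The plan is to transfer the information that $p$-admissibility gives about $\mathcal P$ to the analogous statement about $\mathcal L$ by rescaling and passing to the limit along rays through the origin. First I would record the starting point: by Proposition~\ref{prop:PadmissibilityPropOne}, the linear map $h_y\colon S\to\R^3$ given by $h_y(u)=\mathcal P(y,q,v)-\mathcal P(y,r,w)$ has rank at most one for almost all $y\in\R^3$, and since $\dim S=2$ this is the same as saying $\ker h_y\neq 0$ for almost all $y$.

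Next I would upgrade this ``almost all'' statement to an algebraic identity. Fixing a basis $u^{(1)},u^{(2)}$ of $S$ and representing $h_y$ by a $3\times 2$ matrix, Equation~(\ref{eq:PFormula}) shows that the entries of this matrix are rational functions of $y$, with denominators $1-(q^{-1}y)^T\mathbbm1$ and $1-(r^{-1}y)^T\mathbbm1$. Its three $2\times 2$ minors are therefore rational functions of $y$ which, by the previous paragraph, vanish on an open dense set; hence they vanish identically on their common domain of definition. Consequently $h_y$ has rank at most one for \emph{every} $y$ outside the proper affine subset on which $\mathbbm1y^T-q^T$ or $\mathbbm1y^T-r^T$ fails to be invertible.

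Now fix $x$ satisfying $(q^{-1}x)^T\mathbbm1\neq 0$ and $(r^{-1}x)^T\mathbbm1\neq 0$; this is a dense open condition, and it is precisely what guarantees that $\mathcal L(x,q,v)$ and $\mathcal L(x,r,w)$ are defined. Write $L(u)=\mathcal L(x,q,v)-\mathcal L(x,r,w)$, a linear map $S\to\R^3$. For all but finitely many $t>0$ the point $tx$ avoids the exceptional affine subset found above, so the rescaled map $H_t=\tfrac1t h_{tx}\colon S\to\R^3$ has rank at most one. On the other hand, the definition of $\mathcal L$ gives, for each fixed $u\in S$,
\[ H_t(u)=\frac{\mathcal P(tx,q,v)-\mathcal P(tx,r,w)}{t}\longrightarrow L(u)\qquad(t\to+\infty). \]
Since the set of linear maps $S\to\R^3$ of rank at most one is closed---being cut out by the vanishing of the $2\times2$ minors---and $H_t\to L$, the limit $L$ also has rank at most one. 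As $\dim S=2$, this forces $\ker L\neq 0$, and any nonzero $u\in\ker L$ satisfies $\mathcal L(x,q,v)=\mathcal L(x,r,w)$, as required.

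I expect the only delicate point to be the passage from ``rank at most one for almost all $y$'' to a conclusion valid along an entire ray $\{tx\}$ as $t\to+\infty$: an exceptional set of measure zero could a priori contain a ray, so ``almost all $y$'' does not by itself yield ``almost all $t$'' on a given ray. The rational-identity step is what resolves this, since it confines the failure of the rank bound to a genuine hyperplane arrangement, which a fixed generic ray meets in only finitely many points. The remaining ingredient---that the rank bound persists in the limit---is then immediate from the closedness of the determinantal locus, so no compactness or subsequence argument is needed once everything is phrased in terms of the linear maps $h_y$ and $L$ rather than individual motions $u$.
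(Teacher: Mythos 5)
Your proof is correct, and while it implements the same limiting idea as the paper (deduce the \( \mathcal L \)-statement from Proposition \ref{prop:PadmissibilityPropOne} by letting \( t\to+\infty \) along the ray \( \{tx\} \)), the mechanism is genuinely different. The paper fixes an inner product on \( S \), chooses for each \( t \) a unit-norm witness \( u^t \in \tilde S \) with \( \mathcal P(tx,q,v^t)=\mathcal P(tx,r,w^t) \), extracts a convergent subsequence \( u^{t_n}\to u \) by compactness of \( \tilde S \), and passes to the limit using continuity of the rational function \( \mathcal P \); the desired \( u \) is the subsequential limit. You instead stay at the level of the linear maps \( h_y \) and use two algebraic facts: a rational function vanishing on an open dense set vanishes identically, and the locus of linear maps of rank at most one is closed; the witness is then any nonzero element of \( \ker L \). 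This buys you two things. First, no compactness, subsequence, or choice of witnesses is needed. Second, and more substantively, your Zariski-density step makes explicit a point the paper glosses over: in this paper ``almost all'' means ``on an open dense set'', and an open dense subset of \( \R^3 \) need not meet a fixed ray in an unbounded set (the complement of a closed ray is open dense), so the paper's assertion that Proposition \ref{prop:PadmissibilityPropOne} supplies a witness \( u^t \) for \emph{every} \( t>0 \) requires exactly the kind of upgrade you carry out --- namely that the rank condition holds off the two affine planes where \( \mathbbm 1 y^T-q^T \) or \( \mathbbm 1 y^T-r^T \) degenerates, planes which a ray through the origin meets in at most one point each. The only cost of your route is having to fix a basis of \( S \) and argue with \( 2\times 2 \) minors, which the paper's coordinate-free compactness argument avoids.
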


\begin{proof}
    Fix some inner product on the two dimensional real vector space
    \( S \) and let \( \tilde S \) be the unit sphere
    with respect to this inner product. 
    Let \( t \) be a positive real number. 
    By Proposition \ref{prop:PadmissibilityPropOne} 
    there is some \( u^t \in \tilde S \)
    such that \( \mathcal P(tx,q,v^t)
    = \mathcal P (tx,r, w^t)\). Since \( \tilde S \) is compact 
    there is some \( u \in \tilde S \) and some sequence 
    of real numbers \( (t_n) \) such that \( t_n \rightarrow 
    +\infty\) and \( u^{t_n} \rightarrow u  \) as \( n \rightarrow \infty
    \). Moreover, \( \mathcal P \) is a rational function and is 
    therefore continuous with respect to the standard topology
    on its natural domain. Now,
    \begin{eqnarray*}
        \mathcal L (x,q,v) &=& \lim_{n\rightarrow \infty}\frac{\mathcal P(t_nx,q,v^{t_n})}{t_n} \\
        &=& \lim_{n\rightarrow \infty}\frac{\mathcal P(t_nx,r,w^{t_n})}{t_n} \\
        &=&  \mathcal L(x,r,w)
    \end{eqnarray*}
\end{proof}

Now suppose that \( S = \langle u^1,u^2 \rangle  \) where 
\( u^i \in \R^{3 \times 5} \). 
Since \( \mathcal L (x,q,v) \) is a linear function of \( v \),
it is clear that 
there is some non-zero \( u \in S \)
such that \( \mathcal L(x,q,v)=\mathcal L(x,r,w) \)
if and only if 
\begin{equation}
    \label{eq:DependenceOne}
    c^1\mathcal L(x,q,v^1) + c^2\mathcal L(x,q,v^2)
    = c^1\mathcal L(x,r,w^1) + c^2\mathcal L(x,q,w^2)
\end{equation}
for some \( (c^1,c^2) \neq (0,0)\). Now by rearranging 
(\ref{eq:DependenceOne}) we see that
there is some non-zero \( u \in S \)
such that \( \mathcal L(x,q,v)=\mathcal L(x,r,w) \)
if and only if \( \{\{ \mathcal L(x,q,v^1)-\mathcal L(x,r,w^1),
\mathcal L(x,q,v^2)-\mathcal L(x,r,w^2)\}\}\) is a linearly
dependent multiset.

With these observations in mind, and given \( p \), \( u^1 \) and
\( u^2 \) as above, we define 
\[ f^i(x) = \mathcal L(x,q,v^i) - \mathcal L(x,r,w^i)\]
for \( i =1,2 \). So \( f^i(x) \) is defined for almost all 
\( x \in \R^{ 3} \) and \( f^i(x) \in x^\perp \). 
We have so far shown that if \( S \) is \( p \)-admissible then
\( \{\{ f^1(x),f^2(x)\}\} \) is linearly dependent for almost all \( x \). 
We now turn to an analysis of the functions \( f^i \). 

By Equation (\ref{eq:LFormula}) 
\begin{eqnarray}
    \label{eq:FAnalysisOne}
    f^i(x) &=& ((w^ir^{-1})^T-(v^iq^{-1})^T)x + \\
    &&\left(
    \frac{(q^T)^{-1}\mathbbm1 (q^{-1}x)^T(v^i)^T}{(q^{-1}x)^T\mathbbm1} - 
    \frac{(r^T)^{-1}\mathbbm1 (r^{-1}x)^T(w^i)^T}{(r^{-1}x)^T\mathbbm1} 
    \right)x \notag
\end{eqnarray}
This is a rather complicated looking expression, so the next part of the 
argument (up as far as the statement of Lemma \ref{lem:AffinePolyProp})
is devoted to transforming \( f^i \) to a more manageable form.

Suppose that \( \pi:\mathbb R^{3}\to \R^{3} \) 
is some fixed linear mapping (i.e. \( 3\times 3 \) matrix) of rank two.  
Since \( f^i(x) \in x^\perp \), we observe that 
for almost all \( x \), \(\{\{ f^1(x),f^2(x)\}\}  \) is dependent
if and only if \( \{\{ \pi(f^1(x)),\pi(f^2(x))\}\} \) is dependent.
Also, we observe that 
\begin{eqnarray*}
    q_1^T((w^ir^{-1})^T-(v^iq^{-1})^T) & = & q_1^T(w^ir^{-1})^T - (v^i_1)^T\\
    &=& 0
\end{eqnarray*}
since the first columns of \( q \) and \( r \) are identical 
and the first columns of \( v^i \) and \( w^i \) are identical.
With these observations in mind, 
we choose \( \pi \) to be the projection from \( \R^{3} \)
to \( q_1^\perp \) with kernel \( \langle (q^T)^{-1}\mathbbm1\rangle \)
and define 
\( \overline f^i = \pi\circ f^i \).
Now \( q_1^T(r^T)^{-1}\mathbbm1 = 1 \) since 
\( q_1=p_1 = r_1 \).
Therefore 
\( \pi( (r^T)^{-1}\mathbbm1) = (r^T)^{-1}\mathbbm1 
- (q^T)^{-1}\mathbbm1 \neq 0 \)
(by assumption). Let \( c = (r^T)^{-1}\mathbbm1 - (q^T)^{-1}\mathbbm1 \). 
The observations above show that 
\begin{equation}
    \overline f^i(x) = ( w^ir^{-1}-v^iq^{-1})^Tx -
    \left(\frac{c}{(r^{-1}x)^T\mathbbm1}\right)x^T(w^ir^{-1})^Tx
    \label{eq:FBarFormula}
\end{equation}
Moreover, if \( S \) is \( p \)-admissible then \( \{\{
\overline f^1(x),\overline f^2(x)\}\}\) is dependent for almost 
all \( x \).

It is clear, either from the geometric meaning of \( f^i \) 
in terms of flexes of certain frameworks, or from a direct algebraic 
calculation that replacing \( u^i \) by \( u^i+t^i\mathbbm1^T \) 
leaves the function \( f^i \) unchanged. The geometric intuition here
is that \( t^i \mathbbm1^T \) represents an infinitesimal translation 
of a particular point configuration. But infinitesimal translations
vanish in the limit that occurs in the definition of \( \mathcal L \)
and therefore contribute nothing to \( f^i \). Now we will choose 
a particular \( t^i \) as follows:
First choose some \( d \in \R^{3} \) so that \( 
\{ c,d \}\) 
is a basis for \( q_1^\perp \). 
By replacing \( u^i \) by \( u^i+t^i\mathbbm1^T \) for an appropriate
\( t^i \in \R^3 \) we can arrange that 
the column space of the matrix \( w^ir^{-1}-v^iq^{-1} \) is spanned
by \( d \). 

In other words 
\begin{equation}
    \label{eq:RankOneOne}
    (w^ir^{-1} - v^iq^{-1})^T
    = d(k^i)^T
\end{equation}
for some \( k^i \in \R^{ 3} \).
So 
\begin{equation*}
    \overline{f}^i(x)=d(k^i)^Tx-\left(\frac{c}{(r^{-1}x)^T\mathbbm1}\right)x^T(w^ir^{-1})^Tx
\end{equation*}
Now it is clear that \( \overline{f}^i \) is a rational
homogeneous function of degree 1, so 
\( \{\{{\overline f}^1(x), {\overline f}^2(x)\}\}\) 
is linearly dependent for almost all \( x \) if and only if it is 
linearly dependent for all \( x \) such that \( (r^{-1}x)^T\mathbbm1 =1 \).
Thus, define \( g^i:\{x:(r^{-1}x)^T\mathbbm1 =1\} \to (q_1)^\perp \) by 
\begin{equation}
    g^i(x) = d(k^i)^Tx - c(x^T(w^ir^{-1})^Tx).
\end{equation}
By choosing bases, we can think of \( g^i \) as 
a polynomial function from \( \R^2 \to \R^2 \)
where the first coordinate function is affine linear and the second
coordinate function is affine quadratic. Moreover, if \( S \) is 
\( p \)-admissible, then \( \{\{g^1(x),g^2(x)\}\} \) is linearly
dependent for all \( x \) such that \( xr^{-1}\mathbbm1 = 1 \).

\begin{lem}
    \label{lem:AffinePolyProp}
    For \( i=1,2 \) let \( h_i:\mathbb R^n \to \mathbb R^2 \) be of the
    form \( h_i(z) = (l_i(z),q_i(z)) \) where \( l_i:\mathbb R^n \to 
    \mathbb R\) is affine linear and \( q_i: \mathbb R^n \to \mathbb R
    \) is affine quadratic. Then 
    \( \{\{h_1(z),h_2(z)\}\} \) is linearly dependent for almost all \( z
    \in \mathbb R^n\) if and only if at least one of the following 
    conditions
    is satisfied.
    \begin{enumerate}
        \item There exist constants \( r, s \in \mathbb R \) such that \( (
            r,s) \neq (0,0)\) and
            \( rh_1 +  sh_2 \equiv 0\). (i.e. \( \{\{h_1,h_2\}\} \) 
            is linearly
            dependent over \( \mathbb R \))
        \item \( l_1 \equiv l_2 \equiv 0\) 
        \item There exists some affine linear function \( m:\mathbb R^n
            \to \mathbb R\) such that \( q_i \equiv ml_i \) for \( i=1,2 \).
    \end{enumerate}
\end{lem}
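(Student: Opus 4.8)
The plan is to reduce the multiset dependence condition to a single polynomial identity and then to analyse that identity by unique factorisation. Two vectors $(a,b),(c,d)\in\mathbb{R}^2$ are linearly dependent precisely when $ad-bc=0$, so $\{\{h_1(z),h_2(z)\}\}$ is linearly dependent exactly when
\[ D(z) := l_1(z)q_2(z) - l_2(z)q_1(z) = 0. \]
Since $D$ is a polynomial (of total degree at most three), it vanishes on an open dense set --- equivalently on a set of full measure --- if and only if $D\equiv 0$ identically. Thus the whole lemma reduces to proving that $l_1 q_2 - l_2 q_1 \equiv 0$ if and only if one of (1), (2), (3) holds. The ``if'' direction is a direct substitution in each case: if $h_2=-\tfrac{r}{s}h_1$ then both products agree; if $l_1\equiv l_2\equiv 0$ the expression is zero; and if $q_i\equiv m\,l_i$ then $l_1 q_2 - l_2 q_1 = m(l_1 l_2 - l_2 l_1)=0$. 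I would dispose of this direction quickly.

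For the forward direction I would work in the polynomial ring $\mathbb{R}[z_1,\dots,z_n]$, which is a UFD, and split into cases according to how many of $l_1,l_2$ vanish. If both vanish we are in case (2). If exactly one vanishes, say $l_2\equiv 0$ and $l_1\not\equiv 0$, then $D=l_1 q_2\equiv 0$ forces $q_2\equiv 0$ because the ring is an integral domain; hence $h_2\equiv 0$ and case (1) holds with $(r,s)=(0,1)$. The substantive situation is when $l_1,l_2$ are both nonzero. If they are proportional, say $l_1=\lambda l_2$, then $D\equiv 0$ gives $l_2(\lambda q_2 - q_1)\equiv 0$, so $q_1=\lambda q_2$ and $h_1=\lambda h_2$, which is again case (1).

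The heart of the argument is the remaining case: $l_1,l_2$ nonzero and not proportional. Here I would use primality of degree-one polynomials. From $l_2\mid l_1 q_2$ together with the fact that a nonconstant affine linear polynomial is irreducible (the case where $l_2$ is a nonzero constant being trivial), $l_2$ divides $l_1$ or $l_2$ divides $q_2$; the first alternative would force proportionality, since one polynomial of degree at most one dividing another forces a constant quotient. Hence $l_2\mid q_2$, say $q_2=l_2 m$. Substituting into $D\equiv 0$ and cancelling the nonzero factor $l_2$ yields $q_1=l_1 m$ with the \emph{same} $m$, which is exactly the shape required by case (3).

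It then remains only to check that $m$ is affine linear. The main obstacle I anticipate is precisely this bookkeeping: correctly ruling out that $l_2\mid l_1$ leads to proportionality, and carrying out the degree count that guarantees $m$ is affine rather than merely of degree at most two. The key point is that since $l_1,l_2$ are nonzero and nonproportional they cannot both be constants, so at least one $l_i$ has degree exactly one; then, using additivity of total degree in the domain $\mathbb{R}[z_1,\dots,z_n]$, $\deg m = \deg q_i - \deg l_i \le 2-1 = 1$. This subsumes the edge cases where one $l_i$ is a nonzero constant, since those force the other $l_i$ to have degree one and thus keep $m$ affine, completing the reduction to condition (3).
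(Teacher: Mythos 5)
Your proof is correct and follows essentially the same route as the paper's: reduce the pointwise dependence condition to the single polynomial identity \( l_1q_2 \equiv l_2q_1 \) and then analyse that identity via unique factorisation in \( \mathbb R[z_1,\dots,z_n] \), splitting into the vanishing, proportional, and non-proportional cases. You are in fact more careful than the paper at two points it leaves implicit, namely the edge case where an \( l_i \) is a nonzero constant (a unit rather than an irreducible) and the degree count showing that the common quotient \( m \) is genuinely affine linear.
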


\begin{proof}
    The ``if'' direction is trivially easy to verify. 
    Now assume that \( \{\{h_1(z),h_2(z)\}\} \) is linearly dependent 
    for all \( z \in \mathbb R^n \). So 
    \(
        l_1(z)q_2(z) = l_2(z)q_1(z)
    \)
    for all \( z \in \mathbb R^n \). 
    In other words 
    \begin{equation}\label{eq:DeterminantIdentity}
        l_1q_2 \equiv l_2q_1 
    \end{equation}
    If \( l_1 \equiv 0 \) then 
    either \( l_2 \equiv 0 \) or \( q_1 \equiv 0 \). Thus, either 
    (1) or (2) is true if \( l_1 \equiv 0 \). Similarly, either (1)
    or (2) is true if \( l_2 \equiv 0 \).
    Thus from now on we assume that
    \( l_1 \not \equiv 0  \) and \( l_2 \not \equiv 0 \).
    But \( \mathbb R[X_1,\cdots,X_n] \)
    is a unique factorisation domain. 
    Therefore Equation (\ref{eq:DeterminantIdentity})
    implies that there are two possibilities. 
    Either \( l_1 \equiv \alpha l_2 \) for some  non-zero constant 
    \( \alpha \in \mathbb R \). This
    leads to case (1) of the statment. Or
    \( l_1 | q_1 \) and \( l_2 |q_2 \). 
    This leads to case (3) of the statement.
\end{proof}

Now we apply Lemma \ref{lem:AffinePolyProp} to the functions 
\( g^i \), \( i=1,2 \) and we conclude that if \( S \) is 
\( p \)-admissible then at least one of the following conditions must 
be satisfied.
\begin{enumerate}[(1')]
    \item \( \{\{g^1,g^2\}\} \) is linearly dependent over \( \R \).
    \item \( (k^1)^Tx = (k^2)^Tx = 0 \) for all \( x  \) such that 
        \( (r^{-1}x)^T\mathbbm1 = 1 \).
    \item There is some affine linear function \( m:\{x:(r^{-1}x)^T\mathbbm1
    =1\} \to \R\) such that \( x^T( w^ir^{-1})^Tx = m(x)(k^i)^Tx \)
    for all \( x \) such that \( (r^{-1}x)^T\mathbbm1 = 1 \).
\end{enumerate}

Before proceeding we remind the reader of the following elementary fact: 
Given 
\( m,m' \in \R^{3\times 3} \), then \( x^Tmx = x^Tm'x \) for almost 
all \( x \in \R^3 \) if and only if \( m-m' \) is a skew symmetric matrix.
Now suppose that (1') is true. 
This is equivalent to saying that there exists some 
\( \left( \alpha^1,\alpha^2 \right) \neq (0,0) \) such that 
\begin{equation}
    \alpha^1( w^1r^{-1} - v^1q^{-1})^T +
    \alpha^2( w^2r^{-1} - v^2q^{-1})^T
    = 0
    \label{eq:DependenceRelationOne}
\end{equation}
and 
\begin{equation}
    \alpha^1(( w^1r^{-1})^T+  w^1r^{-1})+
    \alpha^2(( w^2r^{-1})^T+  w^2r^{-1})
    = 0
    \label{eq:DependenceRelationTwo}
\end{equation}

Now Equation (\ref{eq:DependenceRelationTwo}) 
implies that \( (\alpha^1  w^1
+\alpha^2  w^2)r^{-1}\) is skew symmetric. Transposing
Equation 
(\ref{eq:DependenceRelationOne}) and rearranging, we see that 
\( (\alpha^1  v^1
+\alpha^2  v^2)q^{-1} =(\alpha^1  w^1
+\alpha^2  w^2)r^{-1}  \).
Thus we see that if (1') is true then \( \alpha^1 u^1
+\alpha^2 u^2\) is an non-zero infinitesimal isometry of \( p \)
which contradicts our assumption that \( S\cap \mathcal I_p = 0 \). 
Under the assumptions of Theorem \ref{thm:MainTecchnicalResult}
we conclude that (1') cannot be true.

Suppose that (2') is true. So \( ( w^ir^{-1})^T -
( v^iq^{-1})^T = 0\) for \( i=1,2 \). 
Therefore, by Lemma \ref{lem:LinearMotionLemmaOne}, 
\( {u}^i \) is a linear motion of \( p \)  for \( i=1,2 \). It follows
easily from this that every element of \( S \) is a linear 
motion of \( p \). Remembering that we replaced \( u^i \) by \( u^i+t^i\mathbbm1^T \)
earlier in the argument, we conclude that every element of our original
\( S \) is in fact an affine motion of \( p \).

Finally, suppose that (3') is true. Then 
\( m(x) = x^Tl \)
for some \( l\in \R^{3} \). Therefore 
\( x^T( w^ir^{-1})^Tx = x^Tl(k^i)^Tx \)
for almost all \( x  \).
As remarked above this implies that 
\( ( w^ir^{-1})^T  = l(k^i)^T +a^i\)
for some skew symmetric \( a^i \in \R^{3 \times 3} \). 
Combining this this with Equation (\ref{eq:RankOneOne}),
we see that \( ( v^iq^{-1})^T = (l-d)(k^i)^T+a^i \). 
Therefore we have
\begin{eqnarray}
     v^i+a^iq &=& k^i(l-d)^Tq \label{eq:RankOneTwo}\\
     w^i+a^ir &=& k^il^Tr \label{eq:RankOneThree}
\end{eqnarray}
Now let \( z \in \R^{ 5}\) be the unique vector such that
deleting the second and third rows yields \( q^T(l-d) \),
whereas deleting the fourth and fifth rows of \( z \)
yields \( r^Tl \) (this makes sense since 
\( q_1^Td = 0 \) and \( r_1 = q_1 \)). 
Let \( E \) be the subspace of \( \R^{3}\)
that is spanned by \( \{k^1,k^2\} \). Then
Lemma \ref{lem:pEquivalenceLemma} and Equations 
(\ref{eq:RankOneTwo}) and (\ref{eq:RankOneThree}) imply 
that \( S \) is \( p \)-equivalent to \( Ez^T \).
This completes the proof of Theorem \ref{thm:MainTecchnicalResult}.
\qed

\section{Applications to Rigidity Theory}
\label{sec:Applications}

The remainder of the paper will be concerned with applications
of the results of the sections \ref{sec:PAdmissible},
\ref{sec:KNoneFrameworks} and \ref{sec:FivePoints} to 
the rigidity theory of frameworks.

\begin{lem}
    \label{lem:ThreePointsFixed}
    Let \( p \in \R^{3\times 5} \) be generic and 
    let \( S \) be a two dimensional \( p \)-admissible 
    subspace of \( \R^{3\times 5} \). Suppose that
    there is some three point subconfiguration of \( p \)
    such that  that for every \( u \in S \),
    \( u \) restricts to a space of isometries
    of that three point configuration.
    Then, up to a possible permutation of the points, \( S \) is \( p \)-equivalent 
    to one of the spaces described in Example \ref{exa:AdmissibleExampleOne}
    or Example \ref{exa:AdmissibleExampleTwo}.
\end{lem}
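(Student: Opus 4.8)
The plan is to use the hypothesis to reduce to the case where the three distinguished points are held fixed, after which the definition of \( p \)-admissibility collapses to a single polynomial identity that I can resolve by unique factorisation, exactly as in Lemma \ref{lem:AffinePolyProp}. This route is shorter than invoking Theorem \ref{thm:MainTecchnicalResult}, although one could alternatively start from that theorem and analyse its two cases.

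First I would permute the points so that the distinguished three point subconfiguration is \( \{p_3,p_4,p_5\} \). Since \( p \) is generic these three points are not collinear, so their affine span has dimension \( 2=n-1 \) and the restriction map from \( \mathcal I \) to the space of infinitesimal isometries of \( \{p_3,p_4,p_5\} \) is an isomorphism; in particular each \( u\in S \) agrees on points \( 3,4,5 \) with a unique \( \phi(u)\in\mathcal I_p \), and \( u\mapsto\phi(u) \) is linear. Replacing \( S \) by the \( p \)-equivalent subspace \( S'=\{u-\phi(u):u\in S\} \) I may assume that \( u_3=u_4=u_5=0 \) for every \( u\in S \); the map \( u\mapsto u-\phi(u) \) is a linear isomorphism because \( S\cap\mathcal I_p=0 \), and then \( S'\cap\mathcal I_p=0 \) is automatic. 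The one fact that must be checked here is that \( p \)-admissibility is an invariant of the \( p \)-equivalence class: a motion \( u \) restricts to a motion of \( (K_{5,1},p^x) \) if and only if \( u+\iota \) does for every \( \iota\in\mathcal I_p \) (translate the apex velocity by the value at \( x \) of the global isometry inducing \( \iota \)), so condition (2) of Definition \ref{defn:Admissibility} is preserved by \( u\mapsto u-\phi(u) \).

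Next I would unwind Definition \ref{defn:Admissibility} directly in the reduced setting. If \( u=\begin{pmatrix}u_1&u_2&0&0&0\end{pmatrix}\in S \) and \( (u,y) \) is a flex of \( (K_{5,1},p^x) \), the edge conditions at \( p_3,p_4,p_5 \) read \( y^T(x-p_i)=0 \) for \( i=3,4,5 \); for almost all \( x \) the vectors \( x-p_3,x-p_4,x-p_5 \) are independent, so \( y=0 \), and the edge conditions at \( p_1,p_2 \) collapse to \( u_1^T(x-p_1)=0 \) and \( u_2^T(x-p_2)=0 \). Hence \( S \) is \( p \)-admissible if and only if for almost all \( x \) there is a non-zero \( u\in S \) with \( u_1\perp(x-p_1) \) and \( u_2\perp(x-p_2) \). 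Writing \( S=\langle u^1,u^2\rangle \) with \( u^j=\begin{pmatrix}a_1^j&a_2^j&0&0&0\end{pmatrix} \), the existence of such a non-zero combination is the singularity of a \( 2\times2 \) matrix, and since the determinant is a polynomial in \( x \), \( p \)-admissibility becomes the identity
\[ \bigl((a_1^1)^T(x-p_1)\bigr)\bigl((a_2^2)^T(x-p_2)\bigr)=\bigl((a_1^2)^T(x-p_1)\bigr)\bigl((a_2^1)^T(x-p_2)\bigr) \]
holding for all \( x\in\R^3 \).

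Finally I would read off the structure of \( S \) from this identity by unique factorisation in \( \R[x_1,x_2,x_3] \), organised through the ranks of the projections \( \rho_i\colon S\to\R^3 \), \( u\mapsto u_i \). If \( \rho_2=0 \) (respectively \( \rho_1=0 \)) then only \( p_1 \) (respectively \( p_2 \)) moves and \( S \) has the form of Example \ref{exa:AdmissibleExampleOne}, after transposing points \( 1 \) and \( 2 \) if necessary. If both \( \rho_1 \) and \( \rho_2 \) have rank two, then \( (a_1^1)^T(x-p_1) \) and \( (a_1^2)^T(x-p_1) \) are non-proportional linear forms, so matching irreducible factors forces \( a_2^j=\kappa a_1^j \) for a common scalar \( \kappa \) together with \( a_1^j\perp(p_1-p_2) \); thus every \( u\in S \) satisfies \( u_2=\kappa u_1 \) with \( u_1\in(p_1-p_2)^\perp \), which is Example \ref{exa:AdmissibleExampleTwo}. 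It remains to rule out the case that some \( \rho_i \) has rank exactly one: choosing a basis with \( u^2\in\ker\rho_1 \) makes the factor \( (a_1^2)^T(x-p_1) \) vanish, whence the identity forces \( (a_2^2)^T(x-p_2)\equiv0 \) and so \( u^2=0 \), contradicting \( \dim S=2 \). I expect the main obstacle to be the bookkeeping in the reduction step — in particular verifying that \( p \)-admissibility is a \( p \)-equivalence invariant and correctly excluding the rank-one case — since once \( u_3=u_4=u_5=0 \) is in force the remaining computation is essentially the factorisation argument of Lemma \ref{lem:AffinePolyProp}.
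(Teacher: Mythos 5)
Your proof is correct, and its technical core is genuinely different from the paper's. The paper performs the same initial normalisation (after a permutation and passage to a \( p \)-equivalent space, the three distinguished points have zero velocity), but then stays inside its \( \mathcal L \)-machinery: it invokes the limiting Lemma \ref{lem:PLemmaThree} to conclude that \( u \mapsto \mathcal L(x,q,v) \) has rank at most one for almost all \( x \), deduces \( S = Ez^T \) with \( z^T = (0,0,0,k,1) \), and then must return to the full \( \mathcal P \)-admissibility condition to pin down \( E = (p_5-p_4)^\perp \) when \( k \neq 0 \). You bypass \( \mathcal L \) and Lemma \ref{lem:PLemmaThree} entirely via the observation that pinning three points forces the apex velocity \( y \) to vanish for almost all \( x \), so that full admissibility collapses to a single \( 2\times 2 \) determinant identity in affine-linear forms; one pass of unique factorisation (the same mechanism as Lemma \ref{lem:AffinePolyProp}) then delivers the proportionality \( u_2=\kappa u_1 \) and the orthogonality \( u_1\perp(p_1-p_2) \) simultaneously, so Examples \ref{exa:AdmissibleExampleOne} and \ref{exa:AdmissibleExampleTwo} drop out of one case analysis. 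What your route buys: it is elementary and self-contained (no compactness/limit argument), the criterion you work with is necessary \emph{and} sufficient rather than merely necessary, and your reduction step is more careful than the paper's bare ``without loss of generality'' --- you actually verify that \( p \)-admissibility is invariant under \( p \)-equivalence. What the paper's route buys: since the \( \mathcal L \)-machinery is already built for Theorem \ref{thm:MainTecchnicalResult}, its proof is shorter on the page. One caveat you share with the paper: in your \( \rho_2=0 \) case the direction space \( E \) of the single moving point is an arbitrary two-dimensional subspace, whereas Example \ref{exa:AdmissibleExampleOne} literally names the specific space \( \langle e_{11},e_{21}\rangle \), and for generic \( p \) these are not \( p \)-equivalent unless the subspaces coincide (a nonzero element of \( \mathcal I_p \) cannot be supported on a single column); so ``the space described in Example \ref{exa:AdmissibleExampleOne}'' must be read as ``a space of that form''. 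This imprecision, however, is the paper's own, and its proof has it in exactly the same place.
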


\begin{proof}
    Using the notational convention of Section \ref{sec:FivePoints},
    we may assume without loss of generality that \( w = 0 \)
    for all \( u \in S \). So \( \mathcal L(x,r,w) = 0  \) for all
    \( x \). Therefore by Lemma \ref{lem:PLemmaThree} the linear mapping
    \( u \mapsto \mathcal L(x,q,v) \) has rank at most one for almost all
    \( x \). Now \( \mathcal L (x,q,v) = -(q^T)^{-1}\left( I
    -\frac{\mathbbm1 (q^{-1}x)^T}{(q^{-1}x)^T\mathbbm1}\right)v^Tx \).
    But the projection matrix \(   I -\frac{\mathbbm1 
    (q^{-1}x)^T}{(q^{-1}x)^T\mathbbm1}\) has kernel \( \langle
    \mathbbm1 \rangle\) and the first row of \( v^T \) is zero. 
    Therefore we conclude that
    the linear mapping \( u\mapsto v^Tx \) has rank at most one for almost 
    all \( x \). It follows easily that \( S = Ez^T \) where \( E \) 
    is a two dimensional subspace of \( \R^3 \) and \( z^T =
    (0,0,0,k,1)\) for some constant \( k \) (here we may 
    have to permute vertices 4 and 5 to ensure that the fifth entry 
    of \( z^T \) is non zero). If \( k =0 \) then \( S \)
    is \( p \)-equivalent to the space described in Example \ref{exa:AdmissibleExampleOne}.
    It remains to show that if \( k \neq 0 \) then we must have \( E = (p_5-p_4)^\perp \).
    Now, since \( S = Ez^\perp \) is \( p \)-admissible and since \( \mathcal P(x,r,w) =0\)
    for almost all \( x \) and all \( u \in S \),
    it follows from Definition \ref{defn:Admissibility} that 
    for almost all \( x \), there is some 
    non-zero \( \nu^x \in E \) such that 
    \begin{eqnarray}
        ( \nu^x)^T(x-p_4)&=&0 \label{eq:NuOne}\\
        ( \nu^x)^T(x-p_5)&=&0\label{eq:NuTwo}
    \end{eqnarray}
    Subtracting (\ref{eq:NuTwo}) from (\ref{eq:NuOne}) yields \( \nu^x 
    \in E \cap (p_5-p_4)^\perp  \).
    If this space has dimension at most 1, then it clearly follows that \( \nu^x = 0 \)
    for almost all \( x \) which contradicts the fact that 
    \( \nu^x \neq 0 \) for almost all \( x \). 
    Therefore \( \dim(E \cap (p_5-p_4)^\perp) = 2 \)
    as required.
\end{proof}
We can use this to prove a special case of Conjecture \ref{conj:GraverTayWhiteleyConjecture}.
We introduce the following notation for the rest of the section. 
\( G=(V,E) \) is a generically 3-isostatic graph,
\( X =\{1,2,3,4,5\}\subset V \), \( \{e,f\} \subset E(X) \)
and \( G' \) is the 2-extension of \( G \) supported on vertex set \( X \)
and edge set \( \{e,f\} \).
Moreover, given a framework \( (G,\rho) \), let \( p \in \R^{3\times 5} \)
be defined by \( p_i = \rho_i \) for \( i \in X \).

\begin{cor}
    \label{cor:FourEdges}
    Suppose that \( G-\{e,f\} \) contains an implied 
    subgraph \( H \) that is isomorphic to the graph shown below
            \begin{center}
\definecolor{qqqqff}{rgb}{0,0,1}
\begin{tikzpicture}[line cap=round,line join=round,>=triangle 45,x=1.0cm,y=1.0cm]
\clip(-4.3,2.82) rectangle (0.82,6.3);
\draw (-3.12,4.5)-- (-1.74,5.46);
\draw (-3.12,4.5)-- (-1.78,3.42);
\draw (-1.74,5.46)-- (-1.78,3.42);
\draw (-1.78,3.42)-- (-0.42,4.32);
\begin{scriptsize}
    \fill [color = qqqqff] (-3.12,4.5) circle (1.5pt);
    \fill [color = qqqqff] (-1.74,5.46) circle (1.5pt);
    \fill [color = qqqqff] (-1.78,3.42) circle (1.5pt);
    \fill [color = qqqqff] (-0.42,4.32) circle (1.5pt);
\end{scriptsize}
\end{tikzpicture}
\end{center}
    and where the vertices of \( H \) are in \( X \).
    Then either \( G-\{e,f\} \) contains an implied \( K_4 \)
    whose vertices are in \( X \),
    or \( G' \) is generically 3-rigid.
\end{cor}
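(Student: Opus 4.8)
The plan is to prove the dichotomy in contrapositive form: assuming that \( G' \) is \emph{not} generically 3-rigid, I will exhibit an implied \( K_4 \) whose vertices lie in \( X \). First I would fix a generic framework \( (G,\rho) \) and let \( p\in\R^{3\times 5} \) be its restriction to \( X \). Since \( (G,\rho) \) is isostatic and the \( 2 \)-extension \( G' \) is not generically rigid, the discussion of Section \ref{sec:PAdmissible} furnishes a two dimensional \( p \)-admissible subspace \( S\le\R^{3\times 5} \), namely the restriction to \( X \) of the non-trivial flexes of \( (G-\{e,f\},\rho) \). The dictionary underpinning everything is that, for \( i,j\in X \), the pair \( ij \) is an implied edge of \( G-\{e,f\} \) if and only if \( (u_i-u_j)^T(p_i-p_j)=0 \) for every \( u\in S \). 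Hence four vertices of \( X \) span an implied \( K_4 \) exactly when every \( u\in S \) restricts to an infinitesimal isometry of the corresponding four columns of \( p \); in particular, any four points fixed by all of \( S \) give an implied \( K_4 \).

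Next I would feed the triangle of \( H \) into Lemma \ref{lem:ThreePointsFixed}. Write \( \{a,b,c\} \) for the triangle of \( H \) and \( d \) for its pendant vertex, so that \( cd \) is the pendant edge and all four vertices lie in \( X \). Because the three triangle edges are implied, every \( u\in S \) restricts to an isometry of the three point subconfiguration \( \{p_a,p_b,p_c\} \), which is precisely the hypothesis of Lemma \ref{lem:ThreePointsFixed}. Applying that lemma, up to a relabelling of the points \( S \) is \( p \)-equivalent either to the space of Example \ref{exa:AdmissibleExampleOne} or to a space of the Example \ref{exa:AdmissibleExampleTwo} type, with \( \{a,b,c\} \) in the role of the distinguished fixed triple. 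In the first case four of the five points are fixed by every element of \( S \), so by the dictionary above those four vertices already form an implied \( K_4 \) and we are done.

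It remains to rule out the Example \ref{exa:AdmissibleExampleTwo} case, and this is exactly where the pendant edge --- the structure of \( H \) beyond the bare triangle --- is used. In that case only \( \{a,b,c\} \) is fixed, and the two remaining vertices \( d \) and \( g \) (the fifth vertex of \( X \)) both move, with velocities proportional to one another and lying in the plane \( (p_d-p_g)^\perp \); as \( u \) ranges over \( S \) the velocity \( u_d \) sweeps out all of \( (p_d-p_g)^\perp \). The implied pendant edge \( cd \) gives \( (u_c-u_d)^T(p_c-p_d)=0 \) for every \( u\in S \), and since \( c \) is fixed this reads \( u_d^T(p_c-p_d)=0 \). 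Letting \( u_d \) range over the whole plane \( (p_d-p_g)^\perp \) forces \( p_c-p_d\in\langle p_d-p_g\rangle \), i.e. \( p_c,p_d,p_g \) are collinear --- impossible for a generic \( p \). Thus the Example \ref{exa:AdmissibleExampleTwo} case cannot arise, and the first case supplies the required implied \( K_4 \).

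I expect the principal obstacle to be careful bookkeeping rather than a deep idea: one must align the implied triangle with the distinguished triple of Lemma \ref{lem:ThreePointsFixed}, keep track of the accompanying permutation of the five points, and verify the implied-edge/\( S \)-isometry dictionary precisely enough that ``four simultaneously fixed points'' genuinely certifies an implied \( K_4 \). The one genuinely decisive computation is the collinearity contradiction eliminating Example \ref{exa:AdmissibleExampleTwo}, and it is precisely the pendant edge of \( H \), rather than the triangle alone, that makes genericity bite there.
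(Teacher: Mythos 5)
Your proposal is correct and follows essentially the same route as the paper: obtain the two dimensional \( p \)-admissible space \( S \) from the non-rigid extension, apply Lemma \ref{lem:ThreePointsFixed} via the implied triangle of \( H \), read off the implied \( K_4 \) in the Example \ref{exa:AdmissibleExampleOne} case, and use the pendant edge together with genericity to exclude the Example \ref{exa:AdmissibleExampleTwo} case. The only difference is one of detail: where the paper simply asserts that the exclusion ``clearly follows from the genericity of \( p \) and the constraint imposed by the fourth edge of \( H \),'' you supply the explicit argument (the pendant-edge constraint forces \( p_c-p_d\in\langle p_d-p_g\rangle \), i.e.\ a collinearity that genericity forbids), which is a faithful and correct fleshing-out of that step.
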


\begin{proof}
    Let \( (G,\rho) \) be a generic framework.
    Suppose that \( G' \) is not generically rigid.
    Then the nontrivial flexes of \( (G-\{e,f\},\rho) \) induce 
    a \( p \)-admissible subspace of \( \R^{3\times5} \). Since 
    \( H \) contains a triangle, we can apply Lemma 
    \ref{lem:ThreePointsFixed} to conclude that this space is 
    \( p \)-equivalent to either Example 
    \ref{exa:AdmissibleExampleOne} or Example 
    \ref{exa:AdmissibleExampleTwo}. 
    In the first case, it is clear that \( G-\{e,f\} \)
    contains an implied \( K_4 \).
    So it suffices to show that the space from Example 
    \ref{exa:AdmissibleExampleTwo} cannot arise from this 
    framework. However this clearly follows from the genericity
    of \( p \) and the constraint imposed by the fourth 
    edge of \( H \).
\end{proof}

Our next result illustrates a fundamental difficulty that must be resolved directly or indirectly by any possible proof of Conjecture \ref{conj:GraverTayWhiteleyConjecture}.

\begin{thm}
    \label{thm:FivePointsAlwaysHaveAdmissibleMotionSpaces}
    For any generic \( p \in \R^{3 \times 5} \) there is a 
    two dimensional \( p \)-admissible subspace
    \( S \) of \(\R^{3 \times 5} \) such that
    for any 3 point subconfiguration \( p' \) of \( p \),
    there is some \( u\in S \) that is not an isometry of \( p' \).
\end{thm}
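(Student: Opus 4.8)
The plan is to exhibit an explicit $p$-admissible subspace that meets the requirement by construction, rather than to classify all such spaces. The key observation is that Proposition \ref{prop:AdmissibilitySufficientCondition} gives a concrete, checkable sufficient condition for $p$-admissibility in terms of three properties (A1, A2, A3), and that conditions A2 (every element is a linear motion) and A3 (the mysterious matrix identity) are \emph{linear} constraints on $u \in \R^{3 \times 5}$. So I would first assemble the set of all linear motions satisfying A3 into a linear subspace $W \leq \R^{3 \times 5}$, and then argue that for generic $p$ this subspace is large enough that it must contain a two-dimensional subspace $S$ avoiding $\mathcal I_p$ and also avoiding the "degenerate" configurations in which every element fixes some three-point subconfiguration.

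Let me think about the dimension count. By Lemma \ref{lem:LinearMotionLemmaOne}, a linear motion of $p$ is determined by a single matrix $m \in \R^{3 \times 3}$ via $v = mq$, $w = mr$; so the space of linear motions is $9$-dimensional, parametrized by $m$. Condition A3 reads $(q^T)^{-1}\triangle(v^Tq) = (r^T)^{-1}\triangle(w^Tr)$, which with $v = mq$, $w = mr$ becomes a system of (at most) three linear equations in the entries of $m$. Thus the space of $m$ satisfying A2 and A3 has dimension at least $9 - 3 = 6$. Since $\mathcal I_p$ corresponds (among linear motions, i.e.\ the affine motions with $t=0$) to the skew-symmetric matrices $a$ via $u = ap$, this is a $3$-dimensional space sitting inside the $6$-dimensional space of admissible linear $m$. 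So I would first verify that the space $M$ of valid $m$ has dimension at least $6$, and that the skew-symmetric ones form a $3$-dimensional subspace, leaving at least a $3$-dimensional quotient $M / (\text{skew})$ from which to draw a two-dimensional $S$ with $S \cap \mathcal I_p = 0$.

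The main obstacle—and the crux of the theorem—is the final genericity requirement: I must choose $S$ inside $M$ so that no single three-point subconfiguration $p'$ is fixed (i.e.\ flexed only isometrically) by every element of $S$. The condition "every $u \in S$ restricts to an isometry of $p'$" cuts out a proper algebraic subvariety of the Grassmannian of two-planes in $M$, for each of the $\binom{5}{3}=10$ choices of $p'$. I would show that for generic $p$ the full space $M$ does \emph{not} have the property that all its elements fix a fixed $p'$—this is where genericity of $p$ is essential, as one checks the relevant rank/dimension conditions are nondegenerate on a Zariski-open set. Given that, a generic two-plane $S \leq M$ (complementary to $\mathcal I_p$) will avoid all ten bad subvarieties simultaneously, and Proposition \ref{prop:AdmissibilitySufficientCondition} certifies that this $S$ is $p$-admissible. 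The delicate point I expect to grind on is verifying that A3 really does cut the dimension down by exactly (and no more than) the amount needed, and that the "isometry of $p'$" conditions are genuinely proper on $M$ for generic $p$; both reduce to showing certain determinantal expressions in the generic entries of $p$ do not vanish identically, which I would handle by producing a single explicit configuration where they are nonzero.
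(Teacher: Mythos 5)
Your construction is correct, and its core coincides with the paper's own proof: both arguments start from Proposition \ref{prop:AdmissibilitySufficientCondition}, parametrize candidate motions by the $9$-dimensional space $L$ of linear motions of $p$, impose A3 as a linear constraint to get a subspace $M\leq L$, note that the skew-symmetric matrices give exactly the $3$-dimensional space $L\cap\mathcal I_p$ (they satisfy A3 since $\triangle(q^Taq)=0$ for skew $a$), and then choose a $2$-plane in $M$ transverse to the trivial motions. Where you genuinely part ways is the endgame. The paper never argues properness of the ten ``bad'' loci directly; instead it invokes its classification result, Lemma \ref{lem:ThreePointsFixed}: any two dimensional $p$-admissible space fixing a triangle is $p$-equivalent to Example \ref{exa:AdmissibleExampleOne} or Example \ref{exa:AdmissibleExampleTwo}, and since only finitely many of those consist of affine motions while the relevant Grassmannian of equivalence classes has dimension at least $4$, most classes escape. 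Your route---observing that ``every $u\in S$ is an isometry of the triple $p'$'' means $S$ lies inside a fixed subspace, hence defines a Schubert-type proper closed subset of the Grassmannian provided $M$ itself is not contained in that subspace---is more self-contained, avoiding Lemma \ref{lem:ThreePointsFixed} entirely, but it hinges on the non-containment you defer. Here you should sharpen your dimension count: A3 has rank at most $2$, not $3$, because $q_1=r_1$ and $v_1=w_1$ force
\[
q_1^T\left((q^T)^{-1}\triangle(v^Tq)-(r^T)^{-1}\triangle(w^Tr)\right)=v_1^Tq_1-w_1^Tr_1=0
\]
identically, so $\dim M\geq 7$ (this is exactly how the paper gets its bound). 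Since for generic $p$ the isometry conditions of a triple amount to the vanishing of the symmetric part of $m$ as a quadratic form on the $2$-plane spanned by the differences---three independent linear conditions---the space of linear motions fixing any given triple has dimension exactly $6$, and non-containment $M\not\subseteq\{\text{isometries of }p'\}$ is automatic by dimension alone. With your weaker bound $\dim M\geq 6$ this conclusion cannot be drawn, and the explicit-configuration verification you flag as the delicate point would genuinely be required; with the sharp bound it evaporates, and your proof closes cleanly.
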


\begin{proof}
    In fact we will show that there are infinitely many such subspaces.
    The proof essentially amounts to a dimension count based on 
    Proposition \ref{prop:AdmissibilitySufficientCondition}. 
    Let \( L \) be the linear subspace of \( \R^{3\times 5} \) 
    consisting of the linear motions of \( p \). Clearly,
    \( L \) has dimension 9 - it is isomorphic to \( \R^{3\times 3} \). 
    Now consider the equation 
    \begin{equation}
        (q^T)^{-1}\triangle(v^Tq) = (r^T)^{-1}\triangle(w^Tr)
        \label{eq:StressConstraint}
    \end{equation}
    as linear constraint on \( L \). This constraint has rank 
    at most two, since \( q_1 = r_1  \) and \( v_1 = w_1 \). 
    Let \( R \) be the subspace of \( L \) consisting of solutions
    of Equation (\ref{eq:StressConstraint}). The dimension of \( R \)
    is at least 7 and \( R \cap \mathcal I_p \) is a three 
    dimensional linear
    subspace of \( R \). 
    Proposition \ref{prop:AdmissibilitySufficientCondition}
    implies that any two dimensional subspace of \( R \) that intersects
    \( \mathcal I_p \) trivially is in fact \( p \)-admissible.
    We also observe that two dimensional linear subspaces 
    \( S \) and \( S' \)
    of \( R \) are \( p \)-equivalent if and only 
    they project to the same subspace of \( R/(R\cap \mathcal I_p) \).
    Therefore there is a one to one correspondence between 
    the set of \( p \)-equivalence classes of two dimensional \( p \)-admissible 
    subspaces of \( R \) and the set of two dimensional linear subspaces 
    of \( R/(R\cap \mathcal I_p) \). 
    This latter set forms 
    a Grassman manifold \( M \) of dimension at least 4, since the 
    dimension of \( R/(R\cap \mathcal I_p) \) is at least 4.

    Now suppose that \( S \) is 
    one of the \( p \)-admissible spaces constructed in Examples 
    \ref{exa:AdmissibleExampleOne} or \ref{exa:AdmissibleExampleTwo}.
    It is easy to see that for a given \( p \) 
    there are only finitely many such spaces that consist of affine motions.
    We also note that if \( S \) and \( S' \) are \( p \)-equivalent 
    spaces and \( S \) consists of affine motions, then \( S' \) consists
    of affine motions. This follows from the fact that every trivial 
    motion of \( p \) is also an affine motion.
    Therefore there must be many elements of \( M \) 
    that are not \( p \)-equivalent
    to either Example \ref{exa:AdmissibleExampleOne} or Example \ref{exa:AdmissibleExampleTwo}. 
    By Lemma \ref{lem:ThreePointsFixed} this means that there are many
    elements of \( M \) that do not restrict to spaces of isometries for 
    any three point subset of \( p \).
\end{proof}

On the other hand we have the following proposition concerning 
one dimensional \(p  \)-admissible spaces, where 
\( p \in \R^{n \times (n+1)}\) is in general position.

\begin{prop}
    \label{prop:OneDimAdmissible}
    Let \( p \in \R^{ n \times (n+1) }\) be in general 
    position. There is no one dimensional \( p \)-admissible 
    subspace of \( \R^{n\times(n+1)} \).
\end{prop}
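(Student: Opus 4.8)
The plan is to argue by contradiction. Suppose $S=\langle u\rangle$ is a one dimensional $p$-admissible subspace with $u\neq 0$. Since $S$ is one dimensional and the flex conditions are linear (so scaling $u$ scales the admissible apex velocity), condition (2) of Definition \ref{defn:Admissibility} is equivalent to the assertion that, for almost all $x\in\R^n$, the generator $u$ itself extends to an infinitesimal flex of $(K_{n+1,1},p^x)$; that is, there is some velocity $w\in\R^n$ for the apex $x$ with
\[
(u_i-w)^T(p_i-x)=0,\qquad i=1,\dots,n+1.
\]
First I would read this as an overdetermined linear system $A(x)\,w=b(x)$ in the unknown $w$, where the $i$th row of $A(x)$ is $(x-p_i)^T$ and the $i$th entry of $b(x)$ is $u_i^T(x-p_i)$. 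This has $n+1$ equations in only $n$ unknowns, so its solvability is a nontrivial constraint.

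Next I would analyse the consistency of this system. Because $p$ is in general position, the points $p_1,\dots,p_{n+1}$ are affinely independent, so for almost all $x$ the matrix $A(x)$ has rank $n$ and its left kernel is spanned by the single vector $\lambda(x)$ of barycentric coordinates of $x$, characterised by $x=\sum_i\lambda_i(x)p_i$ and $\sum_i\lambda_i(x)=1$; indeed these relations give $\sum_i\lambda_i(x)(x-p_i)=0$. The system is therefore solvable if and only if $\lambda(x)^Tb(x)=0$, that is
\[
\sum_i\lambda_i(x)\,u_i^T(x-p_i)=0
\]
for almost all $x$. The key structural input is that, since $p\in\R^{n\times(n+1)}$ is in general position, every motion of $p$ is an affine motion, as remarked in Section \ref{subsec:PointConfigurations}; so I may write $u_i=Mp_i+t$ for some $M\in\R^{n\times n}$ and $t\in\R^n$.

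Substituting this expression and using $\sum_i\lambda_i(x)=1$ together with $\sum_i\lambda_i(x)p_i=x$, the translation terms cancel and the displayed identity collapses to
\[
x^TM_sx=\sum_i\lambda_i(x)\,p_i^TM_sp_i,
\]
where $M_s=\tfrac12(M+M^T)$ is the symmetric part of $M$ (the skew part contributes nothing to a form $y^TMy$). The right hand side is affine in $x$, since each $\lambda_i$ is, so the difference of the two sides is a polynomial in $x$ whose homogeneous degree two part is exactly $x^TM_sx$; vanishing on an open dense set forces this polynomial to vanish identically, hence $x^TM_sx\equiv 0$ and therefore $M_s=0$. Thus $M$ is skew symmetric, so $u=Mp+t\mathbbm 1^T$ lies in $\mathcal I_p$, contradicting condition (1) that $S\cap\mathcal I_p=0$. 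I expect the main obstacle to be the middle step: correctly identifying the solvability criterion for the overdetermined apex system through its barycentric left kernel vector, and then keeping careful track of which terms survive the substitution so as to isolate the quadratic part $x^TM_sx$. Once that is in hand, the degree argument and the contradiction with condition (1) are routine.
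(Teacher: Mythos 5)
Your proof is correct, but it takes a genuinely different route from the paper's. The paper stays inside its matrix-algebraic framework: it deletes the \( n \)th and \( (n+1) \)st columns of \( p \) to obtain two invertible square configurations \( q \) and \( r \) (after a translation), solves explicitly for the apex velocity via \( \mathcal P \), passes to the limit function \( \mathcal L \), and uses the linear independence of \( (q^T)^{-1}\mathbbm 1 \) and \( (r^T)^{-1}\mathbbm 1 \) to force the relevant quadratic rational terms to be linear; this shows that \( v \) and \( w \) are isometries of \( q \) and \( r \) respectively, and the equality \( \mathcal P(x,q,v) = \mathcal P(x,r,w) \) then glues these into an isometry of \( p \). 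You instead treat the apex equations as a single overdetermined linear system and invoke the Fredholm alternative: since the \( p_i \) are affinely independent, the left kernel of the coefficient matrix is spanned by the barycentric coordinate vector \( \lambda(x) \), so solvability is exactly the polynomial identity \( \sum_i \lambda_i(x)\, u_i^T(x-p_i) = 0 \); combining this with the paper's own remark (Section \ref{subsec:PointConfigurations}) that every motion of \( n+1 \) points in general position is affine reduces the identity to the vanishing of the quadratic form \( x^T M_s x \), whence \( M \) is skew symmetric and \( u \in \mathcal I_p \), the desired contradiction. All the steps check: the reduction of condition (2) of Definition \ref{defn:Admissibility} to the generator \( u \) is valid precisely because \( S \) is one dimensional and the flex equations scale; the left-kernel computation and the cancellation of the translation part are correct; and the degree-two argument legitimately upgrades vanishing on an open dense set to identical vanishing. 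What your argument buys is elementarity and self-containment — no Sherman--Morrison formula, no limiting argument, no independence of \( (q^T)^{-1}\mathbbm 1 \) and \( (r^T)^{-1}\mathbbm 1 \) — and it isolates the conceptual obstruction (the stress-like vector \( \lambda(x) \)) cleanly; what the paper's proof buys is a demonstration of the \( \mathcal P \)/\( \mathcal L \) machinery on a known result, which is exactly the author's stated reason for including it.
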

    
\begin{proof}    
    Let \( q \), respectively \( r \), be obtained 
    by deleting the \( n \)th, respectively the \( (n+1) \)st,
    column of \( p \). Replacing \( p \) by a 
    suitable translate if necessary, we may assume 
    that \( q \) and \( r \) are both invertible. 
    Now our hypothesis ensures that
    \( (q^T)^{-1}\mathbbm 1 \neq (r^T)^{-1}\mathbbm 1 \).
    Moreover, since \( r^{-1}q  \) is identical 
    to \( I \) in its first two columns, it is clear that
    \( \{\{(q^T)^{-1}\mathbbm1, (r^T)^{-1}\mathbbm1 \}\} \) is in 
    fact linearly independent.
    Suppose that \( S = \langle u\rangle \) is 
    a one dimensional \( p \)-admissible subspace 
    of \( \R^{n \times (n+1)} \).
    Let \( v \), respectively \( w \), be obtained
    by deleting the \( n \)th, respectively the 
    \( (n+1) \)st, columns of \( u \). Then 
    \( \mathcal L(x,q,v) - \mathcal L(x,r,w) = 0 \)
    for all \( x \). 
    Therefore \[ -(vq^{-1})^Tx + (wr^{-1})^Tx
        +(q^T)^{-1}\mathbbm1 \frac{x^T(vq^{-1})^Tx}{(q^{-1}x)^T\mathbbm 1}
        -(r^T)^{-1}\mathbbm1 \frac{x^T(wr^{-1})^Tx}{(r^{-1}x)^T\mathbbm 1}
        =0\]
        for all \( x \). But \( \{\{(q^T)^{-1}\mathbbm1,(r^T)^{-1}
    \mathbbm 1\}\}\) is linearly independent. Therefore 
    the rational functions \( \frac{x^T(vq^{-1})^Tx}{(q^{-1}x)^T
    \mathbbm1}\) and \( \frac{x^T(wr^{-1})^Tx}{(r^{-1}x)^T\mathbbm1} \)
    must both in fact be linear functions. In other words,
    \( x^T(vq^{-1})^Tx = x^T(q^{-1})^T\mathbbm 1 z^Tx \) for all \( x \)
    and for some \( z \in \R^{3} \). It follows that
    \( v = z\mathbbm 1^T +aq \) for some skew symmetric matrix 
    \( a \). In other words, \( v \) is 
    an isometry of \( q \). Similarly \( w \) 
    an isometry of \( r \). 
    Now since
    \( \mathcal P(x,q,v) - \mathcal P(x,r,w) = 0 \)
    for all \( x \), it follows easily that 
    \( u \) is an isometry of \( p \),
    contradicting the fact that \( S\cap \mathcal I_p = 0 \).
\end{proof}

Proposition \ref{prop:OneDimAdmissible} immediately implies
the well known fact (see, for example Chapter 5 of 
\cite{MR1251062})
that any Henneberg \( 1 \)-extension of a generically \( n \)-isostatic
graph is also \( n \)-isostatic.
Indeed most proofs of that fact in the literature essentially
amount to proofs of Proposition \ref{prop:OneDimAdmissible}.
We have included another proof here for the sake of completeness
and also to illustrate that our matrix algebraic viewpoint
provides another way to understand some well known geometric
results from the literature.

On the other hand any proof of 
Conjecture \ref{conj:GraverTayWhiteleyConjecture} 
must somehow (directly or indirectly) demonstrate that the motions
whose existence is asserted by Theorem 
\ref{thm:FivePointsAlwaysHaveAdmissibleMotionSpaces} 
cannot arise by deleting two edges from a generic framework. 
Thus Theorem \ref{thm:FivePointsAlwaysHaveAdmissibleMotionSpaces}
identifies a fundamental difficulty with Conjecture \ref{conj:GraverTayWhiteleyConjecture}
that does not arise for the corresponding 
question about Henneberg 1-extensions.

\begin{lem}
    \label{lem:ConicAtInfinity}
    Suppose that \( S \) is a two dimensional space 
    of affine motions of a generic \( p \in \R^{3 \times 5} \) and
    suppose that \( S \) restricts to a space of isometries on 
    each of five distinct 
    edges of \( K_5 \). Then there is some non-zero \( u \in S \) that
    is an infinitesimal isometry of \( p \).
\end{lem}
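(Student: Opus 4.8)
The plan is to linearize the hypothesis through the symmetric part of the derivative of each affine motion, and then to recognize the five edge-conditions as the requirement that five points lie on a conic in the projective plane of directions. Since \( p \) is generic its five columns affinely span \( \R^3 \), so every affine motion \( u\in S \) is given by a unique pair \( (A_u,t_u) \) with \( A_u\in\R^{3\times 3} \), \( t_u\in\R^3 \) and \( u_i=A_up_i+t_u \). I define the linear map \( \psi:S\to\R^{3\times 3} \) by \( \psi(u)=\tfrac12(A_u+A_u^T) \). By Lemma \ref{lem:SkewSymmetricsAndTranslations} the infinitesimal isometries of \( \R^3 \) are exactly the maps \( x\mapsto t+ax \) with \( a \) skew symmetric, so \( u\in S\cap\mathcal I_p \) iff \( A_u \) is skew symmetric iff \( \psi(u)=0 \). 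Thus \( \ker\psi=S\cap\mathcal I_p \), and since \( \dim S=2 \) it suffices to prove that \( \psi(S) \) has dimension at most \( 1 \).

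Next I compute the edge conditions. For an edge \( k=ij \) write \( d_k=p_i-p_j \); since \( u_i-u_j=A_ud_k \), the isometry condition \( (u_i-u_j)^T(p_i-p_j)=0 \) reads \( d_k^TA_u^Td_k=0 \), that is \( d_k^T\psi(u)d_k=0 \) because \( d^TMd=d^T\tfrac12(M+M^T)d \). Hence \( \psi(S) \) is contained in \( W=\{\sigma=\sigma^T:\ d_k^T\sigma d_k=0,\ k=1,\dots,5\} \), and the whole lemma reduces to the purely linear claim \( \dim W\le 1 \). Identifying the \( 6 \)-dimensional space of symmetric \( 3\times3 \) matrices with quadratic forms on \( \R^3 \), each \( \sigma \) is a conic \( C_\sigma \) in \( \mathbb P^2 \), and \( d_k^T\sigma d_k=0 \) says precisely that the direction \( [d_k]\in\mathbb P^2 \) lies on \( C_\sigma \). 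So \( W \) is the space of conics through the five points \( [d_1],\dots,[d_5] \), and by the classical fact that five points of \( \mathbb P^2 \) impose independent conditions on conics unless four of them are collinear, it is enough to show that for generic \( p \) the \( [d_k] \) are distinct and no four are collinear. Coincidence of two directions would force three of the \( p_i \) to be collinear, which fails generically, and four collinear points \( [d_k] \) means four of the difference vectors lie in a common \( 2 \)-dimensional subspace of \( \R^3 \).

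Therefore the main combinatorial step is to verify that for generic \( p \), among any four distinct edges of \( K_5 \) the four difference vectors are never coplanar. The clean way to organize this is to quotient by a candidate plane: given a \( 2 \)-dimensional subspace \( \Pi_0\subset\R^3 \), the quotient map \( \R^3\to\R^3/\Pi_0\cong\R \) sends \( p_i \) to images \( \bar p_i \), and \( d_{ij}\in\Pi_0 \) iff \( \bar p_i=\bar p_j \). If some value is repeated \( m \) times among the \( \bar p_i \) it contributes \( \binom{m}{2} \) difference vectors to \( \Pi_0 \); a value repeated four or more times forces four of the \( p_i \) into a common plane, which is excluded generically, so each repeated value has multiplicity at most three. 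With only five points, the only way to reach \( \binom{m_1}{2}+\binom{m_2}{2}\ge 4 \) under this constraint is a triple and a pair (multiplicities \( 3 \) and \( 2 \)), corresponding to a triangle together with a disjoint edge whose direction is parallel to the triangle's plane. This too fails generically, since \( p_5-p_4 \) does not lie in \( \mathrm{span}(p_2-p_1,p_3-p_1) \) for generic \( p \). Hence no plane contains four of the \( d_k \), so no four \( [d_k] \) are collinear, giving \( \dim W=6-5=1 \) and the desired conclusion \( \ker\psi\ne 0 \).

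I expect this coplanarity step to be the main obstacle, principally because triangles among the five edges genuinely produce collinear triples of directions (three difference vectors of a triangle sum to zero and so are coplanar), which is harmless but must be prevented from extending to a fourth collinear direction. The quotient argument above is designed precisely to confirm, uniformly across all connected and disconnected configurations of the four edges, that a plane can absorb at most three of the difference vectors of a generic configuration; the only borderline case, the triangle-plus-parallel-edge configuration, is ruled out by the same genericity that rules out four coplanar points.
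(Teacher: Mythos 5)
Your proof is correct, but it takes a genuinely different route from the paper's. The paper also runs a conic-at-infinity argument, but it exploits the two-dimensionality of \( S \) differently: it first checks, by enumerating the six isomorphism types of five-edge graphs on five vertices, that the five given edges can always be completed by one further edge of \( K_5 \) to a six-edge graph \( H \) containing a triangle; since that sixth edge imposes a single linear condition on the two-dimensional space \( S \), some non-zero \( u\in S \) is an isometry on all six edges of \( H \), and the paper then argues directly that for generic \( p \) the six edge directions of such an \( H \) cannot lie on a non-trivial conic (a conic containing the three directions of a triangle must degenerate into a pair of lines, which the remaining edges generically avoid), forcing the symmetric part of \( u \) to vanish. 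You instead keep all five conditions acting on the whole of \( S \): you pass to the symmetric part \( \psi(u) \) of the linear part, identify \( \ker\psi|_S \) with \( S\cap\mathcal I_p \), and reduce the lemma to the claim that the space \( W \) of conics through the five edge directions is one-dimensional, which you obtain from the classical fact that five distinct points of \( \mathbb P^2 \) impose independent conditions on conics unless four are collinear, together with a uniform partition-counting argument showing that for generic \( p \) no four edge directions of \( K_5 \) are coplanar (the only candidate configuration being a triangle plus a disjoint edge parallel to its plane). What your route buys: no case analysis over graph isomorphism types, and a precise isolation of exactly which genericity conditions are used. What the paper's route buys: it needs no appeal to the five-point independence criterion, only an elementary degenerate-conic analysis for three specific graphs.

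One small repair: your justification for distinctness of the five directions (``coincidence of two directions would force three of the \( p_i \) to be collinear'') is valid only when the two edges share a vertex. For two disjoint edges, say \( 12 \) and \( 34 \), coincidence of directions means \( p_1-p_2 \) is parallel to \( p_3-p_4 \), which forces no collinearity; it is nevertheless a non-trivial polynomial condition on the entries of \( p \) and so fails generically. The conclusion you need therefore stands; just replace the collinearity claim by this genericity statement.
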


\begin{proof} 
    The proof is a variation of the standard 
    `conic at infinity' argument (see
    Proposition 4.2 of \cite{MR2132290}, for example).
    First we observe that there are 6 different isomorphism
    types of 5 edge graphs on 5 vertices. One readily checks that
    for each of these 6 graphs it is possible to add one edge
    so that the resulting graph is isomorphic to one of the three 
    graphs shown in Figure \ref{fig:SixEdgeGraphs}.
    Now since \( S \) is
    two dimensional it is clear that for any edge \( e \) of \( K_5 \) 
    there is some non-zero \( u \in S \) such that \( u \)
    is an isometry on \( e \). Thus we we can find some non-zero
    \( u \in S \) such that \( u \) is a flex of an \( H \)-framework,
    where \( H \)
    is isomorphic to one of the three graphs shown in Figure 
    \ref{fig:SixEdgeGraphs}.
    \begin{figure}
\definecolor{qqqqff}{rgb}{0,0,1}
\begin{tikzpicture}[line cap=round,line join=round,>=triangle 45,x=1.0cm,y=1.0cm]
\clip(-4.3,2.62) rectangle (7.42,6.3);
\draw (-2.6,5.62)-- (-3.16,4.48);
\draw (-3.16,4.48)-- (-2.16,4.48);
\draw (-2.16,4.48)-- (-2.6,5.62);
\draw (-3.16,4.48)-- (-2.64,3.4);
\draw (-2.64,3.4)-- (-2.16,4.48);
\draw (-2.6,5.62)-- (-1.48,5.48);
\draw (1.1,5.54)-- (0.52,4.36);
\draw (0.52,4.36)-- (1.52,4.36);
\draw (1.52,4.36)-- (1.1,5.54);
\draw (0.52,4.36)-- (1.02,3.32);
\draw (1.02,3.32)-- (1.52,4.36);
\draw (1.52,4.36)-- (2.18,5.08);
\draw (4.24,4.58)-- (5.22,5.46);
\draw (5.22,5.46)-- (6.06,4.64);
\draw (6.06,4.64)-- (4.24,4.58);
\draw (4.24,4.58)-- (4.68,3.46);
\draw (4.68,3.46)-- (5.74,3.5);
\draw (5.74,3.5)-- (6.06,4.64);
\begin{scriptsize}
\fill [color=qqqqff] (-2.6,5.62) circle (1.5pt);
\fill [color=qqqqff] (-3.16,4.48) circle (1.5pt);
\fill [color=qqqqff] (-2.16,4.48) circle (1.5pt);
\fill [color=qqqqff] (-2.64,3.4) circle (1.5pt);
\fill [color=qqqqff] (-1.48,5.48) circle (1.5pt);
\fill [color=qqqqff] (1.1,5.54) circle (1.5pt);
\fill [color=qqqqff] (0.52,4.36) circle (1.5pt);
\fill [color=qqqqff] (1.52,4.36) circle (1.5pt);
\fill [color=qqqqff] (1.02,3.32) circle (1.5pt);
\fill [color=qqqqff] (2.18,5.08) circle (1.5pt);
\fill [color=qqqqff] (4.24,4.58) circle (1.5pt);
\fill [color=qqqqff] (5.22,5.46) circle (1.5pt);
\fill [color=qqqqff] (6.06,4.64) circle (1.5pt);
\fill [color=qqqqff] (4.68,3.46) circle (1.5pt);
\fill [color=qqqqff] (5.74,3.5) circle (1.5pt);
\end{scriptsize}
\end{tikzpicture}
\begin{caption}{The three possibilities for the graph
        \( H \) from the proof of Lemma \ref{lem:ConicAtInfinity}}
        \label{fig:SixEdgeGraphs}
    \end{caption}
\end{figure}

    Since \( u \) is affine, we have \( u_i = mp_i +b \) for 
    some \( m \in \R^{3 \times 3} \) and \( b \in \R^3 \). Therefore
    \begin{equation}
        \label{eq:ConicAtInf}
        (p_i-p_j)^Tm(p_i-p_j) = 0
    \end{equation}
    for all \( ij \) in \( H \). In other words, the 6 edges of a 
    generic \( H \)- framework lie on the conic associated to the 
    matrix \( m \).
    Now we show that the six edges of a generic \( H \)-framework
    cannot lie on a non-trivial conic at infinity. This follows from
    the observation that since \( H \) contains a triangle of edges, any
    such conic must be a degenerate one (i.e. a pair of lines). Thus 
    in the two cases where \( H \) has a pair of triangles, the remaining
    edge will not (generically) lie on the conic spanned by the pair of lines.
    In the other case \( H \) consists of a triangle and three edges that 
    do not form a triangle. These latter three edges will not generically 
    lie on a single line, nor will any of them lie on the line correponding to
    the triangle.
    Therefore Equation (\ref{eq:ConicAtInf}) must in fact
    be a trivial quadratic equation. 
    In other words, \( m \) is skew symmetric 
    and it follows that \( u \) is 
    a trivial motion of \( p \).
\end{proof}

\begin{thm}
    \label{thm:FiveEdgesLeft}
    Suppose that \( |E(X)| \geq 7  \). Then 
    either \( G -\{e,f\}\) contains an implied \( K_4 \)
    whose vertices are in \( X \),
    or \( G' \) is generically 3-rigid.
\end{thm}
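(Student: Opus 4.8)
The plan is to reduce the statement $|E(X)|\geq 7$ to the hypotheses of the already-established results. Suppose $(G,\rho)$ is a generic framework and that $G'$ is \emph{not} generically $3$-rigid; as explained in Section \ref{sec:PAdmissible}, the nontrivial flexes of $(G-\{e,f\},\rho)$ then induce a two dimensional $p$-admissible subspace $S$ of $\R^{3\times 5}$, where $p$ is the generic five point configuration obtained by restricting $\rho$ to $X$. My goal is to show that, under the edge count $|E(X)|\geq 7$, the space $S$ must be $p$-equivalent to the one in Example \ref{exa:AdmissibleExampleOne}, which yields an implied $K_4$ in $G-\{e,f\}$ on the vertices of $X$.

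First I would observe that the edges of $E(X)-\{e,f\}$ that survive into $G-\{e,f\}$ are \emph{implied edges} of the framework, so every $u\in S$ must restrict to an isometry on each such edge. Since $|E(X)|\geq 7$, we have $|E(X)-\{e,f\}|\geq 5$, so $S$ restricts to a space of isometries on at least five distinct edges of $K(X)\cong K_5$. The strategy is to dichotomize according to Theorem \ref{thm:MainTecchnicalResult}: either (1) every element of $S$ is an affine motion of $p$, or (2) $S$ is $p$-equivalent to a subspace of the form $Ez^T$ for some two dimensional $E\leq \R^3$ and $z\in\R^5$. In case (1), Lemma \ref{lem:ConicAtInfinity} applies directly: a two dimensional space of affine motions that restricts to isometries on five edges of $K_5$ must contain a nonzero infinitesimal isometry of $p$, contradicting the defining property $S\cap\mathcal I_p=0$ of a $p$-admissible space. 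So case (1) cannot occur.

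It remains to handle case (2), where $S$ is $p$-equivalent to $Ez^T$. Here I would argue that such a product form, combined with the presence of at least five implied edges, forces some three point subconfiguration of $p$ to be held rigid by all of $S$, placing us in the setting of Lemma \ref{lem:ThreePointsFixed}. Concretely, if $S=Ez^T$ then for $u\in S$ the velocity at vertex $i$ is $z_i$ times a fixed direction in $E$; the vertices $i$ with $z_i=0$ are held fixed by all of $S$, and among the (at most three) vertices with $z_i\neq 0$ the isometry constraints coming from the $\geq 5$ implied edges among them should pin down enough structure that three of the five points form a subconfiguration on which every $u\in S$ acts as an isometry. The main obstacle is precisely this bookkeeping: I must verify that with five implied edges distributed among five vertices, the rank-one column structure of $Ez^T$ cannot evade fixing a triangle. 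Once Lemma \ref{lem:ThreePointsFixed} applies, $S$ is $p$-equivalent to Example \ref{exa:AdmissibleExampleOne} or Example \ref{exa:AdmissibleExampleTwo}; the fifth implied edge then rules out Example \ref{exa:AdmissibleExampleTwo} by genericity (exactly as in the proof of Corollary \ref{cor:FourEdges}), leaving only Example \ref{exa:AdmissibleExampleOne}, which gives the desired implied $K_4$. I expect the delicate part to be the combinatorial case analysis of how five edges can sit on five vertices relative to the support of $z$, ensuring that a rigid triangle always emerges.
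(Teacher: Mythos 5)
Your proposal is correct, and matches the paper, up through the dichotomy: the setup (the \( \geq 5 \) surviving edges of \( E(X) \) are genuine edges of \( G-\{e,f\} \), so all of \( S \) acts by isometries on them), the appeal to Theorem \ref{thm:MainTecchnicalResult}, and the disposal of the affine case via Lemma \ref{lem:ConicAtInfinity} are exactly the paper's first steps. The genuine gap is where you flag it, in the \( Ez^T \) case, and the heuristic you offer for closing it is the wrong one. The dichotomy for \( S=Ez^T \) is not ``\( z_i=0 \) versus \( z_i\neq 0 \)'': your parenthetical ``(at most three) vertices with \( z_i\neq 0 \)'' is unjustified, since all five entries of \( z \) may be nonzero, and \( p \)-equivalence only lets you normalize a single entry to zero (adding \( \lambda\mathbbm{1} \) to \( z \) changes \( Ez^T \) by translations, so the number of zeros you can create equals the maximal multiplicity of a value of \( z \), which is exactly what is in question). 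What an implied edge \( ij \) actually gives you is \( (z_i-z_j)\,c^T(p_i-p_j)=0 \) for all \( c\in E \), i.e.\ either \( z_i=z_j \) or \( p_i-p_j\in E^\perp \); vertices with equal nonzero entries are mutually rigid, and vertices with distinct nonzero entries are not, so vanishing of entries is a red herring.

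The claim you need is in fact true, and proving it gives a route genuinely different from the paper's. Since \( E^\perp \) is a line and no two difference vectors \( p_i-p_j \) of a generic configuration are parallel, at most one of the \( \geq 5 \) implied edges can satisfy \( p_i-p_j\in E^\perp \); the remaining \( \geq 4 \) force equalities \( z_i=z_j \). A simple graph on five vertices all of whose components have at most two vertices has at most two edges, so these equality edges force three equal entries of \( z \), hence a triangle fixed by \( S \); pigeonhole (at most three implied edges inside that triangle, at most one in the complementary pair) also gives an implied edge joining the triangle to a fourth vertex, so Corollary \ref{cor:FourEdges} finishes the proof. The paper never proves this ``forced triangle'' statement: instead it splits the \( Ez^T \) case by the multiplicity pattern of the entries of \( z \), uses Corollary \ref{cor:FourEdges} only when some value has multiplicity at least three, and in the remaining patterns (e.g.\ \( z_4=z_1 \), \( z_5=z_2 \) with \( z_1,z_2,z_3 \) distinct) derives a contradiction directly: choose a nonzero \( cz^T\in S \) that is an isometry on six edges, note at least four of them have \( z_i\neq z_j \), conclude \( c^T(p_i-p_j)=0 \) on four generic directions and hence \( c=0 \). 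So as written your proof is incomplete at its crux; completed as above it becomes a correct and arguably cleaner alternative, trading the paper's three-case computation for one combinatorial lemma about equality edges.
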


\begin{proof}
    Suppose that \( G' \) is not generically 3-rigid and let 
    \( (G,\rho) \) be a generic framework. 
    So \( (G-\{e,f\},\rho) \) has a two dimensional space of 
    flexes that induces a \( p \)-admissible subspace, \( S \), of 
    \( \R^{3\times5} \). Suppose that \( S \) consists of 
    affine motions of \( p \). Since \( \dim(S)=2 \), 
    there is some non-zero \( u \in S \)
    that is nontrivial on \( p \) that restricts to an isometry 
    of 6 edges of \( K(X) \). 
    By Lemma \ref{lem:ConicAtInfinity} 
    some non-zero element of \( S \) is 
    a trivial motion of \( p \) which contradicts the 
    fact that \( S \) is \( p \)-admissible. 
    Therefore by Theorem \ref{thm:MainTecchnicalResult},
    we can assume 
    that \( S= Ez^T \) where \( E \leq \R^3 \) has dimension two 
    and \( z \in \R^5 \).
    Now suppose that \( z_1=z_2=z_3 \). Then \( S \) restricts to 
    a space of isometries of \( \{p_1,p_2,p_3\} \). 
    But \( S \) restricts to a space of 
    isometries of at least two edges of \( E(X)-E(\{1,2,3\}) \). Therefore, 
    by Corollary \ref{cor:FourEdges} we conclude that either
    \( G-\{e,f\} \) contains an implied \( K_4 \) with vertices
    in \( X \) or \( G' \) is generically rigid. 
    So, from now on we can assume that
    in the multiset \( \{\{z_1,z_2,z_3,z_4,z_5\}\} \),
    no value occurs with multiplicity greater than two.
    We will show that this assumption leads to a contradiction.
    Up to a permutation, there are three cases to consider:
    \begin{enumerate}
        \item \( z_1,z_2,z_3 \) are pairwise distinct,
            \( z_4= z_1\) and \( z_5=z_2 \).
        \item \( z_1,z_2,z_3,z_4 \) are pairwise distinct and
            \( z_5 = z_1 \).
        \item \( z_1,z_2,z_3,z_4,z_5 \) are pairwise distinct.
    \end{enumerate}
    We will give details only for first of these cases. 
    So suppose that \( z_1,z_2,z_3 \) are pairwise distinct, 
    and that \( z_4= z_1\) and \( z_5=z_2 \).
    Since \( \dim(S) = 2 \) we can find a nonzero 
    element \( cz^T \in S \), \( c \in E \),
    that is an isomtery of six edges of \( K(X) \).
    But \( cz^T \) is an isometry of the edge \( ij \)
    if and only if \( (z_i-z_j)c^T(p_i-p_j)=0 \). 
    Now, under our working assumption,
    \( z_i-z_j = 0 \) if and only if \( ij = 14 \) or 
    \( ij = 25 \). Therefore there
    are at least four of the six edges for which \( z_i-z_i\neq 0 \). 
    So there are at least four edges for which \( c^T(p_i-p_j) =0 \). 
    But since \( p \) is generic, it follows easily that \( c = 0 \)
    which contradicts the fact that \( cz^T \neq 0 \). 
    The other two cases can be dealt with 
    in a similar way. 
\end{proof}

Theorem \ref{thm:FiveEdgesLeft}
generalises the result of Graver mentioned in the 
introduction. In his result, the five edges in \( E(X) -\{e,f\}
\) are required to form a 5-cycle. 

\section{Acknowledgements}

The author would like to thank Bob Connelly for some very helpful
conversations on this topic
and in particular for pointing out the 
application of the conic at infinity argument 
in the proof of Theorem \ref{thm:FiveEdgesLeft}.
The author also expresses his thanks to Bill Jackson and John Owen 
for allowing him access to their unpublished manuscript \cite{JacksonOwenNotes}.

Finally I would like to thank my daughter Lily for keeping me awake.
%%%%%%%%%%%%%%%%%%%%%%%%%
%% Bibliography
%%%%%%%%%%%%%%%%%%%%%%%%%

\begin{bibdiv}
    \begin{biblist}

\bib{MR0040068}{article}{
author={Bartlett, M. S.},
title={An inverse matrix adjustment arising in discriminant analysis},
journal={Ann. Math. Statistics},
volume={22},
date={1951},
pages={107--111},
issn={0003-4851},
review={\MR{0040068 (12,639c)}},
}

\bib{MR2132290}{article}{
    author={Connelly, Robert},
    title={Generic global rigidity},
    journal={Discrete Comput. Geom.},
    volume={33},
    date={2005},
    number={4},
    pages={549--563},
    issn={0179-5376},
    review={\MR{2132290 (2005m:05154)}},
    doi={10.1007/s00454-004-1124-4},
}

\bib{ThurstonGortler}{article}
{
    author={Gortler, Steven J.},
    author={Gotsman, Craig},
    author={Liu, Ligang},
    author={Thurston, Dylan},
    title={On affine rigidity},
    year={2010},
    eprint={http://arxiv.org/pdf/1011.5553v1.pdf}
}

\bib{MR1251062}{book}{
author={Graver, Jack},
author={Servatius, Brigitte},
author={Servatius, Herman},
title={Combinatorial rigidity},
series={Graduate Studies in Mathematics},
volume={2},
publisher={American Mathematical Society},
place={Providence, RI},
date={1993},
pages={x+172},
isbn={0-8218-3801-6},
review={\MR{1251062 (95b:52034)}},
}

\bib{MR2269396}{article}{
    author={Jackson, Bill},
    author={Jord{\'a}n, Tibor},
    title={On the rank function of the 3-dimensional rigidity matroid},
    journal={Internat. J. Comput. Geom. Appl.},
    volume={16},
    date={2006},
    number={5-6},
    pages={415--429},
    issn={0218-1959},
    review={\MR{2269396 (2007j:05041)}},
    doi={10.1142/S0218195906002117},
}

\bib{JacksonOwenNotes}{misc}
{
author={Jackson, Bill},
author={Owen, John},
title={Notes on Henneberg Moves},
note={Unpublished Manuscript},
}

\bib{MR0269535}{article}{
    author={Laman, G.},
    title={On graphs and rigidity of plane skeletal structures},
    journal={J. Engrg. Math.},
    volume={4},
    date={1970},
    pages={331--340},
    issn={0022-0833},
    review={\MR{0269535 (42 \#4430)}},
}

\bib{MR0035118}{article}{
    author={Sherman, Jack},
    author={Morrison, Winifred J.},
    title={Adjustment of an inverse matrix corresponding to a change in one
    element of a given matrix},
    journal={Ann. Math. Statistics},
    volume={21},
    date={1950},
    pages={124--127},
    issn={0003-4851},
    review={\MR{0035118 (11,693d)}},
}

\bib{MR804977}{article}{
author={Tay, Tiong-Seng},
author={Whiteley, Walter},
title={Generating isostatic frameworks},
note={Dual French-English text},
journal={Structural Topology},
number={11},
date={1985},
pages={21--69},
issn={0226-9171},
review={\MR{804977 (87e:05139)}},
}

\end{biblist}
\end{bibdiv}

\end{document}